\renewcommand{\baselinestretch}{1.125}
\renewcommand{\thefootnote}{\fnsymbol{footnote}}	
\newcommand\DateFootnote{
\begingroup
\renewcommand\thefootnote{}
\footnote{15th November, 2015, revised 15th July 2016}
\setcounter{footnote}{0}
\vspace*{-3ex}
\endgroup}
\renewcommand\section{\@startsection {section}{1}{\z@}%
                                   {-3ex \@plus -1ex \@minus -.2ex}%
                                   {2ex \@plus.2ex}%
                                   {\normalfont\large\bfseries}}
\renewcommand\subsection{\@startsection{subsection}{2}{\z@}%
                                     {-2.5ex\@plus -1ex \@minus -.2ex}%
                                     {1.5ex \@plus .2ex}%
                                     {\normalfont\normalsize\bfseries}}
\renewcommand\subsubsection{\@startsection{subsubsection}{3}{\z@}%
                                     {-2ex\@plus -1ex \@minus -.2ex}%
                                     {1ex \@plus .2ex}%
                                     {\normalfont\normalsize\bfseries}}
 \renewcommand\paragraph{\@startsection{paragraph}{4}{\z@}%
                                    {1.5ex \@plus.5ex \@minus.2ex}%
                                    {-1em}%
                                    {\normalfont\normalsize\bfseries}}
\renewcommand\subparagraph{\@startsection{subparagraph}{5}{\parindent}%
                                       {1.5ex \@plus.5ex \@minus .2ex}%
                                       {-1em}%
                                      {\normalfont\normalsize\bfseries}}
\newcommand{\Zbl}[1]{Zbl:\,\href{http://www.zentralblatt-math.org/zmath/en/search/?q=an:#1}{#1}}
\newcommand{\arXiv}[1]{arXiv:\,\href{http://arxiv.org/abs/#1}{#1}}
\newcommand{\msn}[1]{MR:\,\href{http://www.ams.org/mathscinet-getitem?mr=MR#1}{#1}}
\newcommand{\MSN}[2]{MR:\,\href{http://www.ams.org/mathscinet-getitem?mr=MR#1}{#1}}
\newcommand{\doi}[1]{doi:\,\href{http://dx.doi.org/#1}{#1}}
\theoremstyle{plain}
\newtheorem{thm}{Theorem}
\newtheorem{lem}[thm]{Lemma}
\newtheorem{prop}[thm]{Proposition}
\newtheorem{conj}[thm]{Conjecture}
\newcommand{\floor}[1]{\lfloor{#1}\rfloor}
\newcommand{\cliques}{\textup{cliques}}
\newcommand{\half}{\tfrac12}
\begin{document}

\vspace*{2ex}
{\Large\bfseries\boldmath\scshape Cliques in Graphs Excluding a Complete Graph Minor}

\DateFootnote

\medskip
\bigskip
{\large 
David~R.~Wood\,\footnotemark[1]
}

\bigskip
\bigskip
\emph{Abstract.} This paper considers the following question: What is the maximum number of $k$-cliques in an $n$-vertex graph with no $K_t$-minor? This question generalises the extremal function for $K_t$-minors, which corresponds to the $k=2$ case. The exact answer is given for $t\leq 9$ and all values of $k$. We also determine the maximum total number of cliques in an $n$-vertex graph with no $K_t$-minor for $t\leq 9$. Several observations are made about the case of general $t$. 

\footnotetext[1]{School of Mathematical Sciences, Monash University, Melbourne, Australia
  (\texttt{david.wood@monash.edu}).\\ Research supported by  the Australian Research Council.}

\renewcommand{\thefootnote}{\arabic{footnote}}

\section{Introduction}
\label{Intro}

A basic question of extremal graph theory asks: for a class $\mathcal{G}$ of graphs, what is the maximum number of edges in an $n$-vertex graph in $\mathcal{G}$? The answer is called the extremal function for $\mathcal{G}$. Consider the following two classical examples. Tur\'an's Theorem \citep{Turan41} says that every $n$-vertex graph with no $K_t$-subgraph has at most $(\frac{t-2}{2t-2})n^2$ edges, with equality only for the complete $(t-1)$-partite graph with $\frac{n}{t-1}$ vertices in each colours class (called the \emph{Tur\'an graph}). And Euler's formula implies that the maximum number of edges in a planar graph with $n\geq 3$ vertices equals $3n-6$. 

One way to generalise these results is to consider cliques instead of edges. A \emph{clique} in a graph is a set of pairwise adjacent vertices. A \emph{$k$-clique} is a clique of cardinality $k$. Since a 2-clique is simply an edge, the following natural generalisations of the above question arise: For a class $\mathcal{G}$ of graphs, 
\begin{itemize}
\item what is the maximum number of $k$-cliques in an $n$-vertex graph in $\mathcal{G}$, and 
\item what is the maximum number of cliques in an $n$-vertex graph in $\mathcal{G}$? 
\end{itemize}

Let $\cliques(G)$ be the number of cliques in a graph $G$.  Let $\cliques(G,k)$ be the number of $k$-cliques in a graph $G$. Of course, $\cliques(G,0)=1$, $\cliques(G,1)=|V(G)|$ and $\cliques(G,2)=|E(G)|$. 

\citet{Zykov49} generalised Tur\'an's Theorem by answering the above questions for the class of graphs with no $K_t$-subgraph. He proved that for $t>k\geq 0$, every graph with $n\geq k$ vertices and no $K_t$-subgraph contains at most $\binom{t-1}{k}(\frac{n}{t-1})^{k}$ cliques of size $k$, and every graph with $n$ vertices and no $K_t$-subgraph contains at most $(\frac{n}{t-1}+1)^{t-1}$ cliques. Both bounds are tight for the Tur\'an graph. Bounds on the number of $k$-cliques in graphs of given maximum degree have been extensively studied \citep{CR14,EG14,Galvin11,GLS15,ACM12,Wood-GC07}. Several papers have established upper bounds on the number of $k$-cliques in terms of the number of vertices and the number of edges, or more generally, in terms of the number of $(\leq k-1)$-cliques \citep{CR11,Frohmader10,Eckhoff-DM04,Eckhoff-DM99,Rivin02}.

For planar graphs, \citet{HS79} proved that  the maximum number of triangles is $3n-8$, and \citet{Wood-GC07} proved that the maximum number of 4-cliques is $n-3$, and in total the maximum number of cliques is $8n-16$. See \citep{PY-IPL81} for earlier upper bounds for planar graphs and see \citep{DFJSW} for an extension to arbitrary surfaces. 

This paper considers these questions in graph classes defined by an excluded minor, thus generalising the above results for planar graphs. This direction has been recently pursued by several authors \citep{ReedWood-TALG,NSTW-JCTB06,FOT,LO15,FW16}. These works have focused on asymptotic results when the excluded minor is a general complete graph $K_t$. The primary focus of this paper is exact results, when the excluded minor is $K_3$, $K_4$, $K_5$, $K_6$, $K_7$, $K_8$ or $K_9$ (Sections~\ref{sectK7}--\ref{sectK9}). We also make several observations and conjectures about  general $K_t$-minor-free graphs (Sections~\ref{KtTotal}--\ref{KtSpecific}). 

While bounds on the number of cliques in minor-closed classes are of  independent interest, such results have had diverse applications, including the asymptotic enumeration of minor-closed classes \citep{NSTW-JCTB06}, and in the analysis of an algorithm for finding small separators \citep{ReedWood-TALG}, which in turn has been applied to finding shortest paths \citep{MT-DAM09} and in matrix sparsification \citep{AY13} for example. 

Let $\cliques(n,t,k)$ be the maximum number of $k$-cliques in a $K_t$-minor-free graph on $n$ vertices. 
Let $\cliques(n,t)$ be the maximum number of cliques in a $K_t$-minor-free graph on $n$ vertices. 
Of course, if $n\leq t-1$ then $K_n$ is $K_t$-minor-free, in which case 
\begin{equation}
\label{smalln}
\cliques(n,t,k)=\binom{n}{k}\quad \text{and}\quad\cliques(n,t)=2^n.
\end{equation}

%So we implicitly assume that $n\geq t-1$. 
%Note that $\cliques(n,n+1,k)=\binom{n}{k}$ and $\cliques(n,n+1)=2^n$, which implies that  \eqref{LowerBound} and \eqref{TotalLowerBound} are tight for $t=n+1$. 

%Of course, if $t>n$ then no $n$-vertex graph contains a $K_t$-minor. So we implicitly assume that $n\geq t-1$. 
%Of course, if $n<t-2$ then no $n$-vertex graph contains a $K_{t-1}$-minor. So we implicitly assume that $n\geq t-2$. 

The following example provides an important lower bound on $\cliques(n,t,k)$. For an integer $\ell\geq 1$, an \emph{$\ell$-tree} is a graph defined recursively as follows. First, the complete graph $K_{\ell}$ is an $\ell$-tree. Then, if $C$ is an $\ell$-clique in an $\ell$-tree, then the graph obtained by adding a new vertex adjacent only to $C$, is also a an $\ell$-tree. Every $\ell$-tree has tree-width at most $\ell$, and thus contains no $K_{\ell+2}$-minor.  Observe that for every $\ell$-tree $G$ with $n$ vertices, $\cliques(G,k)=\binom{\ell}{k-1}(n-\frac{(\ell+1)(k-1)}{k})$ and $\cliques(G) = 2^{\ell}(n-\ell+1)$. Hence for $n\geq t-2$ and $t>k\geq 1$, 
\begin{align}
\label{LowerBound}
\cliques(n,t,k) & \geq \binom{t-2}{k-1}\left(n-\frac{(k-1)(t-1)}{k}\right)\\
\label{TotalLowerBound}
\cliques(n,t) & \geq 2^{t-2}(n-t+3) .
\end{align}

The results of this paper show that these lower bounds hold with equality for many values of $t$ and $k$. This is the case for  $n\in\{t-2,t-1\}$ by \eqref{smalln}.
%
%Note that $\cliques(n,n+1,k)=\binom{n}{k}$ and $\cliques(n,n+1)=2^n$, which implies that  \eqref{LowerBound} and \eqref{TotalLowerBound} are tight for $t=n+1$. 
%
%%n=t-1\in\{t-2,t-1\}$. 

The starting point for our investigation is the following classical result by \citet{Dirac64} for $t\in\{3,4,5\}$ and by \citet{Mader68} for $t\in\{6,7\}$.

\begin{thm}[\citep{Dirac64,Mader68}]
\label{Mader}
For $t\in\{3,4,5,6,7\}$, the maximum number of edges in a $K_t$-minor-free graph on $n\geq t-2$ vertices satisfies
$$\cliques(n,t,2)=(t-2)\left(n-\frac{t-1}{2}\right).$$
\end{thm}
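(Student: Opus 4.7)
The lower bound $\cliques(n,t,2) \geq (t-2)\bigl(n - \tfrac{t-1}{2}\bigr)$ is the $k=2$ instance of \eqref{LowerBound}, realised by any $(t-2)$-tree, so only the matching upper bound requires proof. I would argue by induction on $n$, with $n=t-2$ as the base case (handled by \eqref{smalln}).

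For the inductive step, suppose for contradiction that $G$ is a $K_t$-minor-free graph of minimum order with $|E(G)| > (t-2)\bigl(n - \tfrac{t-1}{2}\bigr)$. If some vertex $v$ has $\deg(v) \leq t-2$, then
\[
|E(G-v)| \geq |E(G)| - (t-2) > (t-2)\bigl((n-1) - \tfrac{t-1}{2}\bigr),
\]
and $G-v$ is a $K_t$-minor-free graph on $n-1$ vertices also violating the bound, contradicting minimality. Hence $\delta(G) \geq t-1$.

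It therefore suffices to prove that for $t\in\{3,4,5,6,7\}$, every graph with minimum degree at least $t-1$ and more than $(t-2)\bigl(n - \tfrac{t-1}{2}\bigr)$ edges contains $K_t$ as a minor. For $t=3$ this is trivial, since $\delta\geq 2$ forces a cycle. For $t=4$ it is Dirac's observation that every $K_4$-minor-free (equivalently, series-parallel) graph has a vertex of degree at most $2$. For $t=5$ it is Wagner's bound $|E|\leq 3n-6$, deduced from his characterisation of $K_5$-minor-free graphs as clique-sums of planar graphs and copies of the Wagner graph $V_8$ together with Euler's formula.

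The genuine difficulty lies in $t\in\{6,7\}$, which is Mader's contribution, and where I expect the main obstacle to sit. Here I would follow Mader's edge-contraction method: in a vertex-minimal counterexample $G$, locate an edge $uv$ whose common neighbourhood $N(u)\cap N(v)$ is small enough that contracting $uv$ both preserves $K_t$-minor-freeness and leaves the edge-count still above the target, producing a strictly smaller counterexample and contradicting minimality. Verifying the existence of such an edge requires a delicate case analysis based on the connectivity of $G$ (separating out pieces meeting along small cut sets) combined with a careful global count of $\sum_{uv\in E(G)} |N(u)\cap N(v)|$. This combinatorial balancing is exactly where the restriction $t\leq 7$ is felt, since the analogous argument fails for $t\geq 8$.
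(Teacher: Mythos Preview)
The paper does not prove this theorem at all: it is stated as a classical result attributed to Dirac (for $t\in\{3,4,5\}$) and Mader (for $t\in\{6,7\}$), and the paper explicitly remarks after \cref{Summary} that ``our proof depends on the case $k=2$ and does not reprove the existing results.'' So there is no proof in the paper to compare against; you are sketching a proof of a result the paper simply quotes.

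That said, your outline is a fair account of how the classical proofs go. The reduction to minimum degree at least $t-1$ is standard and correct, and your remarks for $t\leq 5$ are accurate. For $t\in\{6,7\}$ you correctly identify Mader's contraction method as the heart of the matter, but what you have written is a description of the strategy rather than a proof: the actual execution requires showing that a minimal counterexample is $(t-1)$-connected and then carrying out a nontrivial local analysis to locate a contractible edge, and you have not supplied any of that argument. If the intent is merely to indicate that this theorem is classical and to point to the literature, that matches what the paper does; if the intent is to give a self-contained proof, the $t=6,7$ cases remain essentially unwritten.
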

  
\citet{Mader68}  observed that \cref{Mader} does not hold with $t=8$. Let $K_{c\times 2}$ be the complete $c$-partite graph $K_{2,2,\dots,2}$ with $n=2c$ vertices, which can be thought of as $K_{2c}$ minus a perfect matching. In the $t=8$ case, \cref{Mader} would give a bound of $6n-21$ on the number of edges in a $K_8$-minor-free graph, whereas  \citet{Mader68} observed that $K_{2,2,2,2,2}$ has $n=10$ vertices, $40>6\cdot10-21$ edges, and contains no $K_8$-minor.  $K_{c\times 2}$ will be an important example throughout this paper. In general, \citet{Wood-GC07} proved that $K_{c\times 2}$ contains no $K_t$-minor where $t=\floor{\frac{3}{2}c}+1$, and a $K_{t-1}$-minor in $K_{c\times 2}$ is obtained from a $K_c$  subgraph by contracting a $\floor{\frac{c}{2}}$-edge matching in the remaining graph. 

%  Mader first observed that this graph is an important example when considering extremal functions for excluded minors.  In this case,
  
The following theorems summarise the main contributions of this paper.

\begin{thm}
\label{Summary}
For $t\in\{3,4,\dots,9\}$ and $k\in\{1,2,\dots,t-1\}$ and $n\geq t-2$, 
$$\cliques(n,t,k)=\binom{t-2}{k-1}\left(n-\frac{(k-1)(t-1)}{k}\right),$$
except for $(t,k)\in\{(8,2),(9,2),(9,3)\}$ and certain values of $n$ (made precise below) in which case
$$\cliques(n,t,k)=\binom{t-2}{k-1}\left(n-\frac{(k-1)(t-1)}{k}\right)+1.$$
\end{thm}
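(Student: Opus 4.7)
The plan has two halves. For the lower bound, equation \eqref{LowerBound} already supplies the generic value $\binom{t-2}{k-1}(n-\frac{(k-1)(t-1)}{k})$ via $(t-2)$-trees, so nothing more is needed outside the three exceptional triples. For $(t,k)\in\{(8,2),(9,2),(9,3)\}$ one needs a construction beating the $(t-2)$-tree by a single $k$-clique: the natural candidate is $K_{c\times 2}$, or a pasting of $K_{c\times 2}$ onto a $(t-2)$-tree along a common $(t-2)$-clique. Mader's observation that $K_{2,2,2,2,2}$ has one more edge than the $6$-tree bound at $n=10$ handles $(8,2)$ directly, and the general fact (quoted from \citep{Wood-GC07}) that $K_{c\times 2}$ is $K_{\lfloor 3c/2\rfloor+1}$-minor-free lets one write down the analogous extremal examples for $(9,2)$ and $(9,3)$.

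For the upper bound I would induct on $k$ using the identity
\begin{equation*}
\cliques(G,k)=\frac{1}{k}\sum_{v\in V(G)}\cliques(G[N(v)],k-1),
\end{equation*}
together with the elementary observation that $G[N(v)]$ is $K_{t-1}$-minor-free whenever $G$ is $K_t$-minor-free (otherwise $v$ joins the branch sets to yield a $K_t$-minor). Applying the inductive bound to each $G[N(v)]$ with parameters $(d_v,t-1,k-1)$, summing, and using $\sum_v d_v=2|E(G)|$ together with the edge bound for $K_t$-minor-free graphs, a short algebraic manipulation collapses everything to the target $\binom{t-2}{k-1}(n-\frac{(k-1)(t-1)}{k})$. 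The base case $k=2$ for $t\in\{3,\dots,7\}$ is \cref{Mader}; the base case $k=2$ for $t\in\{8,9\}$ is the genuinely new edge bound and is, I expect, the substantive content of Sections~\ref{sectK7}--\ref{sectK9}, proved by a structural analysis of extremal $K_t$-minor-free graphs built around the $K_{c\times 2}$ examples.

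The main obstacle is the interaction between the $+1$ at $k=2$ and the induction. A naive substitution of the exceptional edge bound into the identity above introduces a surplus of $\tfrac{2}{k}\binom{t-3}{k-2}$ at level $k$, which is strictly positive for $k\ge 3$ and therefore does not immediately match the clean formula claimed at $(8,k)$ for $k\ge 3$ and $(9,k)$ for $k\ge 4$. Closing this gap requires a refined argument — either showing that an extremal graph at level $k\ge 3$ cannot simultaneously have an edge-extremal ($K_{c\times 2}$-like) neighborhood around every vertex, or identifying precisely which vertices have such neighborhoods and bounding their contribution. This case analysis, carried out separately for each of $t=7,8,9$ in the corresponding sections, is where the bulk of the technical effort will go; one should also expect to have to check a handful of small-$n$ edge cases (where $n\leq t-1$ forces $K_n$ itself) by hand to close the induction at its base.
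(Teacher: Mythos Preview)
Your high-level plan (charging identity plus induction, with the edge bound as base case) is essentially the paper's approach for $t\leq 7$ and the backbone for $t=8,9$. But there are three concrete mismatches with what the paper actually does.

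First, and most importantly: the $k=2$ bounds for $t=8$ and $t=9$ are \emph{not} proved in this paper. They are the theorems of J{\o}rgensen (1994) and Song--Thomas (2006), quoted as \cref{Jorgensen} and \cref{SongThomas}. These are hard structural results, and the paper explicitly says it ``depends on the case $k=2$ and does not reprove the existing results.'' So your expectation that Sections~\ref{sectK8}--\ref{sectK9} establish the edge bounds is wrong; their substantive content is the extension to $k\geq 3$.

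Second, your plan for the ``$+1$ obstacle'' misses the actual mechanism. The J{\o}rgensen and Song--Thomas theorems give more than an edge count: they \emph{characterise} the extremal graphs. For $t=8$, every graph exceeding $6n-21$ edges is a $(K_{2,2,2,2,2},5)$-cockade; for $t=9$, every graph exceeding $7n-28$ edges is a $(K_{1,2,2,2,2,2},6)$-cockade or $K_{2,2,2,3,3}$. The paper disposes of these exceptional families by direct clique-counting (via \cref{Pasting}/\cref{Cockade}), and then runs the charging argument using the \emph{unadjusted} edge bound on all remaining graphs. So there is no surplus of $\tfrac{2}{k}\binom{t-3}{k-2}$ to absorb, and no per-vertex neighbourhood bookkeeping of the kind you sketch --- except in one place: the $(t,k)=(9,3)$ case (\cref{K9triangles}, \cref{K9trianglesEdges}) genuinely does require a refined argument, because the charging step there appeals to the $t=8$, $k=2$ bound on $G(v)$, and $G(v)$ could itself be a $(K_{2,2,2,2,2},5)$-cockade. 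The paper handles this with a separate connectivity argument showing such a neighbourhood forces a $K_9$-minor in a $6$-connected $G$.

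Third, your pure induction on $k$ via the charging identity is incomplete: the inductive bound $\cliques(d_v,t-1,k-1)\leq\binom{t-3}{k-2}(d_v-\tfrac{(k-2)(t-2)}{k-1})$ requires $d_v\geq t-3$ and is false (indeed negative) for small $d_v$. The paper's induction is on $n+k$ and first deletes any vertex of degree at most $t-2$ (bounding its contribution by $\binom{t-2}{k-1}$ and inducting on $n$), then applies charging only once minimum degree is at least $t-1$.
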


The case $k=2$ (that is, number of edges) was established by \citet{Dirac64} for $t\in\{3,4,5\}$, by \citet{Mader68} for $t\in\{6,7\}$, by \citet{Jorgensen94} for $t=8$, and by \citet{ST06} for $t=9$. Note that our proof depends on the case $k=2$ and does not reprove the existing results. 

For the total number of cliques, we prove:

\begin{thm}
\label{SummaryTotal}
For $t\in\{3,4,5,6,7,8,9\}$, the maximum number of cliques in a $K_t$-minor-free graph on $n\geq t-2$ vertices satisfies $$\cliques(n,t)=2^{t-2}(n-t+3).$$ 
\end{thm}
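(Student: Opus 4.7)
The lower bound is \eqref{TotalLowerBound}, realised by a $(t-2)$-tree. For the upper bound, the plan is to exploit Theorem~\ref{Summary} size-by-size. Writing $\cliques(G)=\sum_{k\geq 0}\cliques(G,k)$ with the convention $\cliques(G,0)=1$, and ignoring momentarily the $+1$ exceptions in Theorem~\ref{Summary},
\[
\cliques(G)\;\leq\;1+\sum_{k=1}^{t-1}\binom{t-2}{k-1}\!\left(n-\frac{(k-1)(t-1)}{k}\right).
\]

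My first task is to verify that the right-hand side equals $2^{t-2}(n-t+3)$ exactly. Substituting $j=k-1$, the sum splits into $n\sum_{j}\binom{t-2}{j}=n\cdot 2^{t-2}$ minus $(t-1)\sum_{j}\binom{t-2}{j}\,j/(j+1)$, and the identity $\binom{t-2}{j}/(j+1)=\binom{t-1}{j+1}/(t-1)$ reduces the latter to $2^{t-1}-1$. Collecting terms yields $2^{t-2}(n-t+3)$ on the nose. This already settles the theorem for $t\in\{3,4,5,6,7\}$, and for $t\in\{8,9\}$ whenever $n$ lies outside the exceptional range described in Theorem~\ref{Summary}.

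For $t\in\{8,9\}$ with $n$ in the exceptional range, the same summation overshoots the target by $1$ (when $t=8$) or by at most $2$ (when $t=9$). To close this gap I would appeal to the classification of graphs achieving the $+1$ bonuses, which is obtained inside the proof of Theorem~\ref{Summary}: these are specific $K_{c\times 2}$-type constructions, and for each of them a direct enumeration of cliques (by choosing at most one vertex per partition class, for instance) shows that the total clique count is comfortably below $2^{t-2}(n-t+3)$. Morally, the $+1$ bonus in one clique size is paid for by a deficit in others; once made precise, the upper bound follows.

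The main obstacle is precisely this last step: the analysis for $t=8,9$ requires a careful description of which values of $n$ trigger which exception and which graphs realise the bonuses, and must in particular rule out a single graph simultaneously attaining the $(9,2)$ and $(9,3)$ bonuses. Once the extremal structures from the proof of Theorem~\ref{Summary} are in hand, the remaining enumeration is finite and routine.
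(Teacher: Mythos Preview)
Your proposal is correct and matches the paper's approach: sum the per-$k$ bounds from \cref{Summary} using the same binomial identity (this is exactly \cref{Total7} for $t\leq 7$), and for $t\in\{8,9\}$ treat the exceptional structures separately by direct clique enumeration, which is precisely what the paper does in the two short theorems following \cref{K8} and \cref{K9triangles}. One clarification on your ``main obstacle'': the worry about a graph simultaneously attaining the $(9,2)$ and $(9,3)$ bonuses is misplaced (indeed $K_{1,2,2,2,2,2}$ attains both), because the argument is a straight dichotomy rather than a bonus-accounting --- if $G$ is a $(K_{2,2,2,2,2},5)$-cockade, a $(K_{1,2,2,2,2,2},6)$-cockade, or $K_{2,2,2,3,3}$ then $\cliques(G)$ is computed directly via \cref{TotalPasting} or the product formula $\prod_i(n_i+1)$ and lands comfortably below the target; otherwise \cref{Jorgensen}, \cref{SongThomas}, and \cref{K9triangles} give the un-boosted bound in every single $k$, and your identity sums these to exactly $2^{t-2}(n-t+3)$.
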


%Sections~\ref{sectK7}, \ref{sectK8} and \ref{sectK9} present the proofs of these results in the cases $t\leq 7$, $t=8$ and $t=9$ respectively.  \cref{sectKt} discusses the case of general $t$. 
We employ the following notation. For a vertex $v$ in a graph $G$, let $N(v)$ be the set of neighbours of $v$, let $N[v]:=N(v)\cup\{v\}$, let $G(v):=G[N(v)]$, and let $G[v]:=G[N[v]]$.

%%%%%%%%%%%%%%%%%%%%%%%%%%%%%%%%%%%%
\section{Cockades}
\label{Cockades}

This section introduces a well known construction that will be important later. Let $H$ be a graph containing a $k$-clique. An $(H,k)$-cockade is defined recursively as follows. First, $H$ is an \emph{$(H,k)$-cockade}. And if $H_1$ and $H_2$ are $(H,k)$-cockades, then the graph obtained from pasting $H_1$ and $H_2$ on a $k$-clique is an  \emph{$(H,k)$-cockade}. It is easy to count cliques in cockades. 

\begin{lem}
\label{Pasting}
For $i\in\{1,2\}$, let $G_i$ be a graph with $n_i$ vertices such that $\cliques(G_i,k)=a(n_i-r)+\binom{r}{k}$, for some fixed $a$ and $r\geq k$. Let $G$ be obtained by pasting  $G_1$ and $G_2$ on an $r$-clique. Then $G$ has $n=n_1+n_2-r$ vertices, and $\cliques(G,k)=a(n-r)+\binom{r}{k}$. 
\end{lem}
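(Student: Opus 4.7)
The plan is a direct inclusion–exclusion count. First I would verify the vertex count: pasting identifies the two copies of the common $r$-clique, so $|V(G)| = n_1 + n_2 - r = n$.

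Next, I would classify the $k$-cliques of $G$. The crucial observation is that pasting on an $r$-clique introduces \emph{no} edges between $V(G_1)\setminus V(G_2)$ and $V(G_2)\setminus V(G_1)$. Consequently, every clique of $G$ is contained in $V(G_1)$ or in $V(G_2)$ (any clique meeting both private sides would need a missing edge). Therefore the set of $k$-cliques of $G$ is exactly the union of the $k$-cliques of $G_1$ and those of $G_2$, and the overlap is precisely the set of $k$-cliques contained in the shared $r$-clique, of which there are $\binom{r}{k}$ (using $r\geq k$).

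By inclusion–exclusion,
\[
\cliques(G,k) \;=\; \cliques(G_1,k) + \cliques(G_2,k) - \binom{r}{k}.
\]
Substituting the hypothesis $\cliques(G_i,k) = a(n_i-r) + \binom{r}{k}$ for $i\in\{1,2\}$ and simplifying gives
\[
\cliques(G,k) \;=\; a(n_1 - r) + a(n_2 - r) + \binom{r}{k} \;=\; a(n-r) + \binom{r}{k},
\]
as required, since $n - r = n_1 + n_2 - 2r$.

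There is no real obstacle here; the only point to be careful about is the justification that no $k$-clique of $G$ straddles both sides, and that the double-counted cliques are exactly the $k$-subsets of the shared $r$-clique. The remainder is routine algebra, which I have compressed above rather than grinding through.
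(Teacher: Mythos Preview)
Your proof is correct and matches the paper's approach exactly: inclusion--exclusion gives $\cliques(G,k)=\cliques(G_1,k)+\cliques(G_2,k)-\binom{r}{k}$, and substitution yields the result. If anything, you supply more justification than the paper, which writes only the four-line computation without commenting on why every $k$-clique of $G$ lies in one side.
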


\begin{proof}
\begin{align*}
\cliques(G,k)
&=\cliques(G_1,k)+\cliques(G_2,k)-\binom{r}{k}\\
&=a(n_1-r)+\binom{r}{k}+a(n_2-r)+\binom{r}{k}-\binom{r}{k}\\
&=a(n_1+n_2-2r)+\binom{r}{k}\\
&=a(n-r)+\binom{r}{k}.\qedhere
\end{align*}
\end{proof}

\begin{lem}
\label{TotalPasting}
For $i\in\{1,2\}$, let $G_i$ be a graph with $n_i$ vertices such that $\cliques(G_i)=a(n_i-r)+2^r$, for some fixed $a$ and $r\geq 0$. Let $G$ be obtained by pasting  $G_1$ and $G_2$ on an $r$-clique. Then $G$ has $n=n_1+n_2-r$ vertices, and $\cliques(G)=a(n-r)+2^r$. 
\end{lem}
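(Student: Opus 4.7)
The plan is to mimic the proof of \cref{Pasting} verbatim, with the total-clique counting function $2^r=\cliques(K_r)$ in place of $\binom{r}{k}=\cliques(K_r,k)$. The structural input is the definition of clique-sum: $G_1$ and $G_2$ share exactly the vertex set of an $r$-clique $C$, and no edge of $G$ joins $V(G_1)\setminus C$ to $V(G_2)\setminus C$.

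The first and only substantive step is the observation that every clique of $G$ is entirely contained in $V(G_1)$ or entirely in $V(G_2)$, since a clique with vertices on both sides of the separator would be forced to lie inside $C$. The cliques lying on both sides are precisely the subsets of $C$, of which there are $2^r$ (counting the empty clique, consistent with the paper's convention $\cliques(G,0)=1$). Inclusion-exclusion therefore gives $\cliques(G)=\cliques(G_1)+\cliques(G_2)-2^r$.

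The second step is a routine algebraic substitution: plug in the hypothesised formulas $\cliques(G_i)=a(n_i-r)+2^r$, cancel one copy of $2^r$, and use $n=n_1+n_2-r$ to collapse the result to $\cliques(G)=a(n-r)+2^r$. The only real point of care is the bookkeeping around the empty clique: one must check that $\cliques(K_r)=2^r$ under the paper's convention, which holds since $\sum_{k=0}^{r}\binom{r}{k}=2^r$, so the inclusion-exclusion correction is exact. I do not anticipate any genuine obstacle; the argument is a two-line computation parallel to \cref{Pasting}.
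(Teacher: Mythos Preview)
Your proposal is correct and matches the paper's proof essentially line for line: the paper starts from the identity $\cliques(G)=\cliques(G_1)+\cliques(G_2)-2^r$ and then substitutes the assumed formulas to obtain $a(n_1+n_2-2r)+2^r=a(n-r)+2^r$. You supply slightly more justification for the inclusion--exclusion step than the paper does, but the argument is the same.
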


\begin{proof}
\begin{align*}
\cliques(G)
&=\cliques(G_1)+\cliques(G_2)-2^r\\
&=(a(n_1-r)+2^r)+(a(n_2-r)+2^r)-2^r\\
&=a(n_1+n_2-2r)+2^r\\
&=a(n-r)+2^r.\qedhere
\end{align*} 
\end{proof}
%
%\begin{lem}
%\label{5Cockade}
%For every $(K_{2,2,2,2,2},5)$-cockade $G$ on $n$ vertices and for $k\in\{0,1,\dots,5\}$,
%\begin{align*}
%\cliques(G,k)&=\frac{1}{5}\binom{5}{k}(2^k-1)(n-5)+\binom{5}{k}\text{, and}\\
%\cliques(G) & = \frac{211}{5}n - 179\enspace.
%\end{align*}
%\end{lem}
%
%\begin{proof}
%Clearly $\cliques(K_{2,2,2,2,2},k)=\binom{5}{k}2^k=a(10-5)+\binom{5}{k}$, where $a=\frac{1}{5}\binom{5}{k}(2^k-1)$. By \cref{Pasting}, we have $\cliques(G,k)=a(n-r)+\binom{5}{k}$, as claimed. Note that $\sum_{k=0}^5(\frac{1}{5}\binom{5}{k}(2^k-1)(n-5)+\binom{5}{k})=\frac{211}{5}n - 179$.
%\end{proof}

\begin{lem}
\label{Cockade}
For every $(K_{c\times 2},c)$-cockade $G$ on $n$ vertices and for $k\in\{0,1,\dots,c\}$,
\begin{align*}
\cliques(G,k)&=\frac{1}{c}\binom{c}{k}(2^k-1)(n-c)+\binom{c}{k}\text{, and}\\
\cliques(G) & = \frac{1}{c}(3^c-2^c)(n-c)+2^c\enspace.
\end{align*}
\end{lem}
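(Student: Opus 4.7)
The plan is to proceed by induction on the number of copies of $K_{c\times 2}$ used in the recursive construction of the cockade $G$, applying \cref{Pasting,TotalPasting} in the inductive step.

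For the base case $G = K_{c\times 2}$, I would compute the clique counts directly. Since $K_{c\times 2}$ is complete $c$-partite with parts of size $2$, a $k$-clique is obtained by choosing which $k$ of the $c$ parts to use ($\binom{c}{k}$ ways) and then one of the two vertices from each chosen part ($2^k$ ways), so $\cliques(K_{c\times 2},k)=\binom{c}{k}2^k$. Plugging $n=2c$ into the claimed formula gives $\frac{1}{c}\binom{c}{k}(2^k-1)\cdot c+\binom{c}{k}=\binom{c}{k}2^k$, matching. Summing over $k$ and using $\sum_k\binom{c}{k}2^k=3^c$ confirms the total-cliques formula at $n=2c$ as well: $\frac{1}{c}(3^c-2^c)\cdot c+2^c=3^c$.

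For the inductive step, if $G$ is not a single copy of $K_{c\times 2}$, write $G$ as the result of pasting two smaller $(K_{c\times 2},c)$-cockades $G_1$ and $G_2$ on a common $c$-clique, with $|V(G_i)|=n_i$ and $n=n_1+n_2-c$. By induction,
\[
\cliques(G_i,k)=\tfrac{1}{c}\tbinom{c}{k}(2^k-1)(n_i-c)+\tbinom{c}{k},
\qquad
\cliques(G_i)=\tfrac{1}{c}(3^c-2^c)(n_i-c)+2^c.
\]
These match the hypotheses of \cref{Pasting} (with $r=c$, $a=\tfrac{1}{c}\binom{c}{k}(2^k-1)$, noting $\binom{r}{k}=\binom{c}{k}$) and of \cref{TotalPasting} (with $r=c$, $a=\tfrac{1}{c}(3^c-2^c)$, noting $2^r=2^c$). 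Applying those lemmas yields the claimed formulas for $G$.

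There is no real obstacle: the pasting lemmas are custom-tailored for exactly this sort of recursion, so the only substantive check is the base case, where the identities $\binom{c}{k}2^k=\tfrac{1}{c}\binom{c}{k}(2^k-1)c+\binom{c}{k}$ and $3^c=\tfrac{1}{c}(3^c-2^c)c+2^c$ need to be verified. Both are immediate algebraic manipulations.
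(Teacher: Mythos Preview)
Your proposal is correct and follows essentially the same approach as the paper: verify the base case $G=K_{c\times 2}$ by computing $\cliques(K_{c\times 2},k)=\binom{c}{k}2^k$ and $\cliques(K_{c\times 2})=3^c$, then invoke \cref{Pasting,TotalPasting} with $r=c$ and the appropriate $a$ for the inductive step. The paper's proof is just a more condensed version of what you wrote.
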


\begin{proof}
The first claim follows from \cref{Pasting} with $r=c$ and $a=\frac{1}{c}\binom{c}{k}(2^k-1)$ since $\cliques(K_{c\times 2},k)=\binom{c}{k}2^k=a(2c-c)+\binom{c}{k}$. The second claim follows from \cref{TotalPasting} with $r=c$ and $a=\frac{1}{c}(3^c-2^c)$ since  $\cliques(K_{c\times 2})=3^c=a(2c-c)+2^c$.
\end{proof}

%%%%%%%%%%%%%%%%%%%%%%%%%%%%%%%%%%%%
\section{\boldmath $K_t$-minor-free graphs with $t\leq 7$}
\label{sectK7}

This section proves Theorems~\ref{Summary} and \ref{SummaryTotal} for $t\leq 7$. 

\begin{thm}
\label{MaderGen}
For $t\in\{3,4,5,6,7\}$ and $k\in\{1,2,\dots,t-1\}$ and $n\geq t-2$, 
the maximum number of $k$-cliques in a $K_t$-minor-free graph on $n$ vertices satisfies
$$\cliques(n,t,k)=\binom{t-2}{k-1}\left(n-\frac{(k-1)(t-1)}{k}\right).$$
\end{thm}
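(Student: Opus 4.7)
My plan is to prove \cref{MaderGen} by a joint induction on $t$ and $k$ (say on $t+k$), taking as base cases $k=1$ (where $\cliques(n,t,1)=n$ matches the formula trivially) and $k=2$ (where the formula is exactly \cref{Mader}). Both base cases are available for every $t\in\{3,\ldots,7\}$, so the restriction $t\leq 7$ enters only through Mader's theorem. For the inductive step $k\geq 3$ with $t$ fixed, I argue by a further induction on $n\geq t-2$. The base $n=t-2$ is immediate: $G\subseteq K_{t-2}$, and one checks $\binom{t-2}{k-1}\bigl((t-2)-\tfrac{(k-1)(t-1)}{k}\bigr)=\binom{t-2}{k}$ directly.

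For the inductive step on $n$, let $G$ be $K_t$-minor-free on $n\geq t-1$ vertices and split on minimum degree. In the first case, some vertex $v$ satisfies $|N(v)|\leq t-2$; since $k$-cliques through $v$ correspond to $(k-1)$-cliques in $G(v)$, we have $\cliques(G,k)\leq \cliques(G-v,k)+\binom{|N(v)|}{k-1}$. The inductive hypothesis on $n$ bounds the first term by $\binom{t-2}{k-1}\bigl(n-1-\tfrac{(k-1)(t-1)}{k}\bigr)$, and $\binom{|N(v)|}{k-1}\leq\binom{t-2}{k-1}$ handles the second; the two combine to exactly the required bound.

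In the second case, every vertex satisfies $|N(v)|\geq t-1$. Here I use the double-counting identity $k\cdot\cliques(G,k)=\sum_v\cliques(G(v),k-1)$. Each $G(v)$ is $K_{t-1}$-minor-free---because any $K_{t-1}$-minor in $G(v)$ would extend through $v$ to a $K_t$-minor in $G$---and has $|N(v)|\geq t-1\geq (t-1)-2$ vertices, so the outer induction hypothesis applied with parameters $(t-1,k-1)$ yields
$$\cliques(G(v),k-1)\leq\binom{t-3}{k-2}\Bigl(|N(v)|-\tfrac{(k-2)(t-2)}{k-1}\Bigr).$$
Summing over $v$, using $\sum_v|N(v)|=2|E(G)|$ together with Mader's edge bound $|E(G)|\leq(t-2)(n-\tfrac{t-1}{2})$, and applying the algebraic identities $\binom{t-3}{k-2}\cdot\tfrac{t-2}{k-1}=\binom{t-2}{k-1}$ and $\tfrac{2(k-1)-(k-2)}{k-1}=\tfrac{k}{k-1}$, the constants should collapse to exactly $k\binom{t-2}{k-1}\bigl(n-\tfrac{(k-1)(t-1)}{k}\bigr)$, which gives the desired bound after dividing by $k$.

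The main obstacle is ensuring the two cases mesh. Isolating the low-degree case is essential because the induction hypothesis at $(t-1,k-1)$ only applies when $|N(v)|\geq t-3$, and its linear bound becomes useless (in fact negative) for yet smaller degrees; without the $|N(v)|\geq t-1$ assumption in the second case, the sum would not collapse cleanly. The remaining arithmetic hinges on the happy coincidence that Mader's extremal slope $t-2$ combines with the coefficient $\binom{t-3}{k-2}$ from the inductive bound to regenerate exactly the coefficient $\binom{t-2}{k-1}$ of the target formula---a coincidence that foreshadows why the statement has to be adjusted once Mader's theorem breaks down at $t=8$.
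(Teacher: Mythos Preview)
Your proposal is correct and follows essentially the same approach as the paper: the same split into a low-degree vertex-deletion case and a high-degree case handled by the double-counting identity $k\cdot\cliques(G,k)=\sum_v\cliques(G(v),k-1)$ together with the inductive bound on the $K_{t-1}$-minor-free neighborhoods and Mader's edge bound. The only cosmetic difference is the packaging of the induction---the paper inducts on $n+k$ over all admissible $(t,k,n)$ simultaneously, while you layer an outer induction on $t+k$ with an inner induction on $n$---but the two schemes are equivalent here.
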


\begin{proof}
The lower bound is provided by \eqref{LowerBound}. For the upper bound, we proceed by induction on $n+k$. The claim is trivial if $k=1$. \cref{Mader} proves the claim when $k=2$. Now assume that $k\geq 3$. In the case $n=t-2$ the claimed upper bound on $\cliques(n,t,k)$ is $\binom{n}{k}$, which obviously holds. Let $G$ be a $K_t$-minor-free graph on $n\geq t-1$ vertices. 

First suppose that $\deg(v)\leq t-2$ for some vertex $v$ of $G$. Each clique of $G$ either contains $v$ or does not contain $v$. The $k$-cliques of $G$ that contain $v$ are in 1--1 correspondence with the $(k-1)$-cliques of $G(v)$. And the cliques of $G$ that do not contain $v$ are exactly the cliques of $G-v$. Thus $$\cliques(G,k)=\cliques(G(v),k-1)+\cliques(G-v,k),$$ which by induction is at most 
$$\binom{\deg(v)}{k-1}+ 
\binom{t-2}{k-1}\left(n-1-\frac{(k-1)(t-1)}{k}\right)
\leq \binom{t-2}{k-1}\left(n-\frac{(k-1)(t-1)}{k}\right),$$ 
as desired. 

Now assume that $G$ has minimum degree at least $t-1$. For each $k$-clique $C$ in $G$ send a charge of $\frac{1}{k}$ to each vertex in $C$. The charge received by each vertex $v$ equals $\frac{1}{k}\cliques(G(v),k-1)$. Since the total charge equals $\cliques(G,k)$, and $G(v)$ is $K_{t-1}$-minor-free, 
\begin{align*}
\cliques(G,k)
=\frac{1}{k}\sum_{v\in V(G)}\cliques(G(v),k-1)
\leq
\frac{1}{k}\sum_{v\in V(G)}\cliques(\deg(v),t-1,k-1).
\end{align*}
(This argument is essentially that of \citet[Lemma~5]{FOT}.)\ 
Since $\deg(v)\geq t-1$, by induction, 
\begin{align}
\cliques(G,k)
&\leq\frac{1}{k}\sum_{v\in V(G)}\binom{t-3}{k-2}\left(\deg(v)-\frac{(k-2)(t-2)}{k-1}\right)\nonumber\\
&=\frac{1}{k}\binom{t-3}{k-2}\left( 2|E(G)|-\frac{(k-2)(t-2)n}{k-1}\right)\label{CliquesEdges}\\
&\leq \frac{1}{k}\binom{t-3}{k-2}\left(2\,\cliques(n,t,2)-\frac{(k-2)(t-2)n}{k-1}\right).\nonumber
\end{align}
By Theorem~\ref{Mader}, 
\begin{align*}
\cliques(G,k)
&\leq\frac{1}{k}\binom{t-3}{k-2}\left(2(t-2)\left(n-\frac{t-1}{2}\right)-\frac{(k-2)(t-2)n}{k-1}\right)\\
&=\frac{(t-2)}{k}\binom{t-3}{k-2}\left(2n-(t-1)-\frac{(k-2)n}{k-1}\right)\\
&=\frac{(t-2)}{k}\binom{t-3}{k-2}\left(\left(\frac{k}{k-1}\right)n-(t-1)\right)\\
&=\frac{(t-2)}{(k-1)}\binom{t-3}{k-2}\big(n-\frac{(k-1)(t-1)}{k}\big)\\
&=\binom{t-2}{k-1}\left(n-\frac{(k-1)(t-1)}{k}\right),
\end{align*}
as desired.
\end{proof}

\begin{thm}
\label{Total7}
For $t\in\{3,4,5,6,7\}$, the maximum number of cliques in a $K_t$-minor-free graph on $n\geq t-2$ vertices equals $2^{t-2}(n-t+3)$. 
\end{thm}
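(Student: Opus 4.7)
The lower bound $\cliques(n,t)\geq 2^{t-2}(n-t+3)$ is already supplied by \eqref{TotalLowerBound} (realised by any $(t-2)$-tree on $n$ vertices), so only the matching upper bound needs work. The key observation is that \cref{MaderGen} already gives a tight bound on $\cliques(G,k)$ for each individual $k$, and for $t\leq 7$ these bounds are additive in the sense that summing them recovers the target expression exactly. So the plan is to reduce the total-clique bound to a purely algebraic computation.

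Concretely, I would argue as follows. Since a $K_t$-minor-free graph contains no $K_t$-subgraph, every clique has size at most $t-1$, hence for any $K_t$-minor-free graph $G$ on $n\geq t-2$ vertices,
$$\cliques(G)=1+\sum_{k=1}^{t-1}\cliques(G,k),$$
where the leading $1$ accounts for the empty clique. Apply \cref{MaderGen} to each term: for $k\in\{1,\dots,t-1\}$,
$$\cliques(G,k)\leq\binom{t-2}{k-1}\left(n-\tfrac{(k-1)(t-1)}{k}\right).$$
It then remains to verify the closed form
$$1+\sum_{k=1}^{t-1}\binom{t-2}{k-1}\left(n-\tfrac{(k-1)(t-1)}{k}\right)=2^{t-2}(n-t+3).$$

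The coefficient of $n$ on the left is $\sum_{k=1}^{t-1}\binom{t-2}{k-1}=\sum_{j=0}^{t-2}\binom{t-2}{j}=2^{t-2}$, matching the right. For the constant term, I would use the identity $\binom{t-2}{k-1}/k=\binom{t-1}{k}/(t-1)$ to rewrite
$$(t-1)\sum_{k=2}^{t-1}\binom{t-2}{k-1}\tfrac{k-1}{k}=\sum_{k=2}^{t-1}(k-1)\binom{t-1}{k},$$
and then evaluate this via the standard sums $\sum_{k=0}^{t-1}k\binom{t-1}{k}=(t-1)2^{t-2}$ and $\sum_{k=0}^{t-1}\binom{t-1}{k}=2^{t-1}$ (restoring the $k=0,1$ terms costs only $-1$). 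After bookkeeping the constant collapses to $-(t-3)2^{t-2}-1$, and the final $+1$ from the empty clique cancels the trailing $-1$, yielding the stated bound.

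The main (and essentially only) obstacle is keeping the binomial algebra straight; there is no serious combinatorial difficulty, and in particular no separate minimum-degree or separator case analysis is required. Note that this clean collapse is special to $t\leq 7$: for $t\in\{8,9\}$, \cref{Summary} records exceptional values of $\cliques(n,t,k)$ exceeding the $\binom{t-2}{k-1}(n-\tfrac{(k-1)(t-1)}{k})$ formula, so a naive summation would overshoot and a more delicate argument (presumably in the subsequent sections) is needed there.
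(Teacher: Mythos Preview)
Your proposal is correct and follows exactly the paper's own proof: cite \eqref{TotalLowerBound} for the lower bound, then sum the bounds from \cref{MaderGen} over $k=1,\dots,t-1$ (plus the empty clique) and verify the resulting identity. The only difference is that you spell out the binomial algebra behind the identity, whereas the paper simply states it.
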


\begin{proof}
The lower bound is \eqref{TotalLowerBound}. The upper bound follows from Theorem~\ref{MaderGen} since 
\begin{equation*}
1+\sum_{k=1}^{t-1}\binom{t-2}{k-1}\left(n-\frac{(k-1)(t-1)}{k}\right) =2^{t-2}(n-t+3).\qedhere
\end{equation*}
\end{proof}

Theorem~\ref{MaderGen} and \ref{Total7} were previously proved for $t=5$ by \citet{Wood-GC07} (using a different method). 

\section{\boldmath $K_8$-minor-free graphs}
\label{sectK8}

Determining the maximum number of cliques in a $K_8$-minor-free graph is more difficult than in the $t\leq 7$ cases, since $K_{2,2,2,2,2}$ would be a counterexample to \cref{Mader} with $t=8$ (see \cref{Intro}). In fact, every $(K_{2,2,2,2,2},5)$-cockade would be a counterexample. \citet{Jorgensen94} showed these are the only counterexamples. 

\begin{thm}[\citep{Jorgensen94}]
\label{Jorgensen}
Every $K_8$-minor-free graph on $n\geq6$ vertices has at most $6n-21$ edges or is a $(K_{2,2,2,2,2},5)$-cockade (which has $6n-20$ edges). 
\end{thm}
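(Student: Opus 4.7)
The plan is induction on $n$. The base case $n=6$ holds since $K_6$ has $15=6\cdot 6-21$ edges. For the inductive step, let $G$ be $K_8$-minor-free on $n\geq 7$ vertices, and let $\delta := \delta(G)$; the goal is to show $|E(G)|\leq 6n-21$ or that $G$ is a $(K_{2,2,2,2,2},5)$-cockade. If $\delta\leq 5$, remove a minimum-degree vertex $v$ and apply induction: $|E(G-v)|\leq 6(n-1)-20$ (in either induction outcome) gives $|E(G)|\leq 6n-26+\delta\leq 6n-21$, as required.

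So assume $\delta\geq 6$, and first suppose some vertex $v$ satisfies $\deg(v)=6$. If $G[N(v)]$ is not complete, pick a non-edge $xy$ in $N(v)$ and form $G' := (G-v)+xy$. Then $G'$ is $K_8$-minor-free: any $K_8$-minor in $G'$ lifts to one in $G$ by adding $v$ to the branch set containing $x$, since $v$ is adjacent to both $x$ and $y$ in $G$ and therefore replaces the added edge as a bridge (or, if $xy$ is an inter-branch edge, by the same addition of $v$ the edge $vy$ replaces it). Induction gives $|E(G')|\leq 6(n-1)-20$, and since $|E(G)|=|E(G')|+5$ we conclude $|E(G)|\leq 6n-21$. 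If instead $G[N(v)]=K_6$, then $G-v$ contains a $K_6$; since every clique in a $(K_{2,2,2,2,2},5)$-cockade is contained in a single copy of $K_{2,2,2,2,2}$ (by induction on pastings, as vertices on opposite sides of a clique separator are non-adjacent) and $K_{2,2,2,2,2}$ has clique number $5$, $G-v$ is not a cockade, so induction gives $|E(G-v)|\leq 6(n-1)-21$ and hence $|E(G)|\leq 6n-21$.

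It remains to handle $\delta\geq 7$ with $|E(G)|\geq 6n-20$, showing $G$ is a cockade: this is the main obstacle. My plan is to (i) locate a subgraph $H\cong K_{2,2,2,2,2}$ in $G$, and (ii) show that $H$ meets $G-V(H)$ only in some $5$-clique $C\subseteq V(H)$, so that $C$ is a clique cutset and both sides of the resulting separation, being $K_8$-minor-free with strictly fewer vertices and edge counts still at the tight value $6n_i-20$, are cockades by induction; these paste back to a cockade $G$. For step (i), the neighborhood $G(v)$ of a minimum-degree vertex $v$ must be $K_7$-minor-free (else $v$ augments a $K_7$-minor in $G(v)$ to a $K_8$-minor in $G$), so \cref{Mader} bounds $|E(G(v))|\leq 5\deg(v)-15$; pushing this against the global edge count $|E(G)|\geq 6n-20$ forces the extremal structure in $G(v)$ and plants a $K_{2,2,2,2}$ inside $N(v)$, which together with $v$ yields $K_{2,2,2,2,2}$. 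For step (ii), the delicate argument is contrapositive: a non-clique attachment of $H$ produces either two independent external attachments to distinct parts of $H$, or an external connected piece linking two non-adjacent vertices of $H$; in either case, contracting a suitable perfect matching across the parts of $H$ and using the external piece(s) as additional branch sets yields a $K_8$-minor, contradiction. This attachment analysis is by far the most delicate part of the proof.
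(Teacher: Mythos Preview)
The paper does not prove \cref{Jorgensen}; it is quoted from \citet{Jorgensen94} and used as a black box. There is therefore no ``paper's own proof'' to compare your proposal against.

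On the substance of your sketch: the reductions for $\delta\leq 5$ and for $\delta=6$ (via the edge-addition trick when $G[N(v)]\neq K_6$, and via the clique-number argument when $G[N(v)]=K_6$) are standard and correct. The difficulty is entirely in the case $\delta\geq 7$, and here your outline does not yet constitute a proof. In step~(i) you assert that the bound $|E(G(v))|\leq 5\deg(v)-15$ from \cref{Mader}, combined with $|E(G)|\geq 6n-20$, ``forces the extremal structure in $G(v)$ and plants a $K_{2,2,2,2}$ inside $N(v)$''. But \cref{Mader} only gives an edge bound; it does not characterise the extremal $K_7$-minor-free graphs, and in any case $K_{2,2,2,2}$ has $24<5\cdot 8-15$ edges and is not extremal. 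Nothing you have written pins down a copy of $K_{2,2,2,2,2}$ in $G$. In step~(ii), even granting such a subgraph $H$, your contrapositive sketch is far from complete: you must show that \emph{every} attachment of $G-V(H)$ to $H$ that is not through a $5$-clique yields a $K_8$-minor, and this case analysis (which in J{\o}rgensen's paper occupies most of the work) cannot be summarised as ``contract a suitable perfect matching''. As written, the hard half of the theorem is asserted rather than proved.
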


We now prove \cref{Summary} for $K_8$-minor-free graphs with $k\geq3$. 

\begin{thm}
\label{K8}
For $k\in\{3,4,5,6,7\}$ the maximum number of $k$-cliques in a $K_8$-minor-free graph on $n\geq 6$ vertices satisfies
$$\cliques(n,8,k)=\binom{6}{k-1}\left(n-\frac{7(k-1)}{k}\right).$$
\end{thm}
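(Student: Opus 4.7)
The lower bound is supplied by \eqref{LowerBound}, so I focus on the upper bound, which I prove by induction on $n$. The base case $n=6$ is immediate because $K_6$ is $K_8$-minor-free and one checks directly that $\binom{6}{k}=\binom{6}{k-1}\cdot\frac{7-k}{k}=\binom{6}{k-1}(6-\frac{7(k-1)}{k})$ for every $k\in\{3,\dots,7\}$. For the inductive step, let $G$ be a $K_8$-minor-free graph on $n\geq 7$ vertices. The plan splits according to the minimum degree of $G$, mirroring the proof of \cref{MaderGen} but substituting \cref{Jorgensen} for \cref{Mader} in the high-minimum-degree case.

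If $G$ contains a vertex $v$ with $\deg(v)\leq 6$, then $G(v)$ has at most $6$ vertices, so $\cliques(G(v),k-1)\leq\binom{6}{k-1}$, and by the inductive hypothesis $\cliques(G-v,k)\leq\binom{6}{k-1}(n-1-\frac{7(k-1)}{k})$. Adding these two bounds gives exactly $\binom{6}{k-1}(n-\frac{7(k-1)}{k})$, as required. Otherwise the minimum degree of $G$ is at least $7$. Then I run the same charging argument as in \cref{MaderGen}: since $G(v)$ is $K_7$-minor-free and $\deg(v)\geq 7\geq 5$, \cref{MaderGen} applied with $t=7$ yields
\[
\cliques(G,k)\;=\;\frac{1}{k}\sum_{v\in V(G)}\cliques(G(v),k-1)\;\leq\;\frac{1}{k}\binom{5}{k-2}\left(2|E(G)|-\frac{6(k-2)n}{k-1}\right).
\]

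Now I invoke \cref{Jorgensen}. If $|E(G)|\leq 6n-21$, then substituting and simplifying $12n-\frac{6(k-2)n}{k-1}=\frac{6kn}{k-1}$ and $\frac{6}{k-1}\binom{5}{k-2}=\binom{6}{k-1}$, one obtains the desired bound $\binom{6}{k-1}(n-\frac{7(k-1)}{k})$. The remaining and genuinely new case is when $G$ is a $(K_{2,2,2,2,2},5)$-cockade. For $k\in\{6,7\}$, cockades contain no $k$-clique (since $K_{2,2,2,2,2}$ is $K_6$-free and pasting on a $5$-clique creates none), so the bound is trivial. For $k\in\{3,4,5\}$, \cref{Cockade} with $c=5$ gives $\cliques(G,k)=\tfrac{1}{5}\binom{5}{k}(2^k-1)(n-5)+\binom{5}{k}$, and a short calculation shows that the coefficient of $n$ in the desired bound minus this expression, namely $\binom{6}{k-1}-\tfrac15(2^k-1)\binom{5}{k}$, equals $1,5,\tfrac{44}{5}$ for $k=3,4,5$, all positive. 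Since a $(K_{2,2,2,2,2},5)$-cockade has $n\geq 10$, and at $n=10$ both sides equal $80$ for $k=3$ (while the cockade bound $\binom{5}{k}2^k$ is strictly smaller than the target for $k=4,5$), the inequality propagates to all $n\geq 10$, completing the proof.

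The main obstacle is the cockade case: \cref{Mader} fails for $t=8$ precisely on these graphs, so one has to check directly that, although $(K_{2,2,2,2,2},5)$-cockades have one extra edge, this ``edge surplus'' does not inflate the $k$-clique count for $k\geq 3$ beyond the standard $\ell$-tree lower bound. The calculation is tight at $n=10,k=3$ (both sides equal $80$), which explains why the exceptional $+1$ appears only in the $k=2$ row of \cref{Summary} and not here.
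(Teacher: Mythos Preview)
Your proof is correct and follows essentially the same approach as the paper: induction on $n$, the low-degree deletion step, the charging argument via \cref{MaderGen} with $t=7$, and a direct check of the $(K_{2,2,2,2,2},5)$-cockade case using \cref{Cockade}. The only differences are organizational (you place the cockade case inside the high-minimum-degree branch and use the threshold $6/7$ instead of the paper's $5/6$), and your observation that the $k=3$, $n=10$ case is tight is a nice addition.
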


\begin{proof}
The lower bound is provided by \eqref{LowerBound}. For the upper bound, let $G$ be a $K_8$-minor-free graph on $n\geq 7$ vertices. We proceed by induction on $n$ with $k$ fixed. In the base case with $n\in\{6,7\}$, the result holds by \eqref{smalln}. Now assume that $n\geq 8$. 

Say $G$ is a $(K_{2,2,2,2,2},5)$-cockade. Then $\cliques(G,6)=\cliques(G,7)=0$. By \cref{Cockade}, $\cliques(G,3) = 14n-60 \leq 15n-70$ (since $n\geq 10$) and $\cliques(G,4)= 15n-70 \leq 20n-105$ and $\cliques(G,5) = \frac{31}{5}n-30 \leq 15n-84$. This show that $\cliques(G,k) \leq \binom{6}{k-1}n-\binom{7}{k}(k-1)$. Now assume that $G$ is not a $(K_{2,2,2,2,2},5)$-cockade. Thus $|E(G)|\leq 6n-21$ by Theorem~\ref{Jorgensen}. 

The remainder of the proof is analogous to the proof of Theorem~\ref{MaderGen}, so we sketch it briefly. First, delete a vertex of degree at most 5 and apply induction. Now assume minimum degree at least 6. Charge each $k$-clique to its vertices, and count the charge at each vertex $v$ with respect to $\deg(v)$ and the number of $(k-1)$-cliques in $G(v)$, which is $K_7$-minor-free (applying Theorem~\ref{MaderGen}). 
Counting the total charge, \eqref{CliquesEdges} gives
$$\cliques(G,k) \leq \frac{1}{k}\binom{5}{k-2}\left( 2|E(G)|-\frac{(k-2)3n}{k-1}\right).$$
Since $|E(G)|\leq 6n-21$, it follows by the same analysis used in the proof of \cref{MaderGen} 
that  $\cliques(G,k)\leq\binom{6}{k-1}(n-\frac{7(k-1)}{k})$. 
\end{proof}

\begin{thm}
The maximum number of cliques in a $K_8$-minor-free graph on $n\geq 6$ vertices equals $64(n-5)$. 
\end{thm}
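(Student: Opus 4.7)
The plan is to establish the upper bound $64(n-5)$ by summing bounds on $\cliques(G,k)$ over all $k$, exactly as in the proof of \cref{Total7}. First note that $64(n-5) = 2^{8-2}(n - 8 + 3)$, so the lower bound is immediate from \eqref{TotalLowerBound} with $t=8$. For the upper bound, let $G$ be a $K_8$-minor-free graph on $n \geq 6$ vertices. Since $G$ has no $K_8$, only $k \in \{0,1,\dots,7\}$ contribute to $\cliques(G)$; the terms $k=0$ and $k=1$ contribute $1$ and $n$ respectively, and for $k \in \{3,\dots,7\}$ I would apply \cref{K8} to bound $\cliques(G,k) \leq \binom{6}{k-1}(n - 7(k-1)/k)$. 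The only missing ingredient is a bound on $|E(G)|$.

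The one subtlety is that the edge bound $|E(G)| \leq 6n-21$ from \cref{Jorgensen} fails precisely for $(K_{2,2,2,2,2},5)$-cockades, so I would split into two cases. In the non-cockade case, the summation
\[
\cliques(G) \leq 1 + n + (6n - 21) + \sum_{k=3}^{7} \binom{6}{k-1}\!\left(n - \frac{7(k-1)}{k}\right)
\]
collapses, via the identities $\sum_{k=3}^{7}\binom{6}{k-1} = 57$ and $\sum_{k=3}^{7}\binom{6}{k-1}\frac{7(k-1)}{k} = 300$, to $64n - 320 = 64(n-5)$, as desired.

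In the cockade case I would invoke \cref{Cockade} with $c=5$ directly: it gives $\cliques(G) = \tfrac{1}{5}(3^5 - 2^5)(n-5) + 2^5 = \tfrac{211}{5}(n-5) + 32$. Every $(K_{2,2,2,2,2},5)$-cockade has $n \geq 10$ (the construction starts with $K_{2,2,2,2,2}$ on $10$ vertices, and each further pasting on a $5$-clique adds exactly $5$ vertices), so $\tfrac{109}{5}(n-5) \geq 109 > 32$, giving $\cliques(G) \leq 64(n-5)$ with room to spare. Intuitively, the cockades trade a small surplus in edges for a large deficit in big cliques---$K_{2,2,2,2,2}$ has clique number only $5$, so no cockade contains a $6$- or $7$-clique---which is why they fall comfortably below the target. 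The only real obstacle is spotting that the case split is needed at all; once it is made, both branches reduce to mechanical arithmetic powered by \cref{K8,Jorgensen,Cockade}.
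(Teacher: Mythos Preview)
Your proof is correct and follows essentially the same approach as the paper: split into the cockade and non-cockade cases, handle the cockade case via \cref{Cockade} (the paper writes $\frac{211}{5}n-179$, which equals your $\frac{211}{5}(n-5)+32$), and handle the non-cockade case by summing the edge bound from \cref{Jorgensen} with the $k$-clique bounds from \cref{K8}. The paper's proof is terser, but your explicit arithmetic and the observation that cockades have $n\geq 10$ are exactly what is needed to fill it in.
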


\begin{proof}
\cref{Cockade} implies that a $(K_{2,2,2,2,2},5)$-cockade is far from extremal for the total number of cliques (since $\frac{211}{5}n - 179 < 64(n-5)$). The result then follows from Theorem~\ref{Jorgensen} and Theorem~\ref{K8}, with 6-trees providing the extremal example (see \eqref{LowerBound}).
\end{proof}

%%%%%%%%%
\section{\boldmath $K_9$-minor-free graphs}
\label{sectK9}

\citet{ST06} determined the extremal function for $K_9$-minors and characterised the extremal examples. 

\begin{thm}[\citet{ST06}]
\label{SongThomas}
Every $K_9$-minor-free graph on $n\geq 7$ vertices has at most $7n-28$ edges or is a $(K_{1,2,2,2,2,2},6)$-cockade or is isomorphic to $K_{2,2,2,3,3}$.
\end{thm}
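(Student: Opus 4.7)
The plan is to mirror \citet{Mader68}'s proof of \cref{Mader} and \citet{Jorgensen94}'s proof of \cref{Jorgensen}, arguing by induction on $n$ via a minimum counterexample. Let $G$ be a $K_9$-minor-free graph on $n\geq 7$ vertices with $|E(G)|\geq 7n-27$ that is neither a $(K_{1,2,2,2,2,2},6)$-cockade nor isomorphic to $K_{2,2,2,3,3}$, chosen with $n$ as small as possible. The first reduction is to minimum degree at least $8$: if $\deg(v)\leq 7$ then $|E(G-v)|\geq 7(n-1)-27$, so by minimality $G-v$ belongs to one of the two exceptional families, and a case analysis on the possible attachments of the at most seven neighbours of $v$ inside that graph either exhibits an explicit $K_9$-minor (exploiting the already-dense structure of a cockade or of $K_{2,2,2,3,3}$) or shows that $G$ itself must be a $(K_{1,2,2,2,2,2},6)$-cockade, contradicting the choice of $G$.

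The next step is to rule out small clique cuts. If $G$ has a clique separator $S$ with $|S|\leq 6$, decompose $G=G_1\cup G_2$ with $V(G_1)\cap V(G_2)=S$; the identity $|E(G_1)|+|E(G_2)|=|E(G)|+\binom{|S|}{2}$ combined with minimality forces one side to lie in an exceptional family, and pasting on $S$ compels $G$ to be a $(K_{1,2,2,2,2,2},6)$-cockade (necessarily with $|S|=6$), contradicting the choice of $G$. Non-clique separators of size at most $6$ are reduced to the clique-separator case by contracting $V(G_i)\setminus S$ to a single vertex in a $K_9$-minor-free auxiliary graph, so that $S$ becomes a clique.

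The remaining case --- $G$ has minimum degree at least $8$, is $7$-connected, and has no clique cut of size at most $6$ --- is the technical heart of the argument. Pick a vertex $v$ of minimum degree and consider $G(v)$, which is $K_8$-minor-free. By \cref{Jorgensen}, either $|E(G(v))|\leq 6\deg(v)-21$ or $G(v)$ is a $(K_{2,2,2,2,2},5)$-cockade. A discharging/averaging argument in the spirit of \eqref{CliquesEdges}, combined with $|E(G)|\geq 7n-27$, precludes the first alternative from holding at every vertex, producing a vertex $v$ whose neighbourhood contains a copy of $K_{c\times 2}$ with $c\geq 5$. Since $K_{c\times 2}$ has a $K_{\lfloor 3c/2\rfloor}$-minor (noted after \cref{Mader}), adjoining $v$ gives a $K_8$-minor inside $G[v]$, and the $7$-connectivity of $G$ is then used to attach a ninth branch set from $V(G)\setminus N[v]$ that reaches each of the eight existing branch sets through $N(v)$.

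I expect the main obstacle to be this final step: certifying $K_{2,2,2,3,3}$ as the unique non-cockade exception, and ruling out a $K_9$-minor in the constrained configurations where construction of the ninth branch set threatens to fail, requires a delicate case analysis of how external vertices attach to the dense neighbourhood $G(v)$ and how the various $K_{c\times 2}$-cockade pieces interact with the rest of $G$. This structural case analysis is presumably where the bulk of the technical work in \citet{ST06} lies.
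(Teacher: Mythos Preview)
The paper does not prove \cref{SongThomas}; it is quoted verbatim from \citet{ST06} and used as a black box in the proofs of \cref{K9}, \cref{K9triangles}, and \cref{K9trianglesEdges}. There is therefore no proof in the paper against which to compare your proposal.

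For what it is worth, your outline is a reasonable caricature of the Song--Thomas strategy (minimum counterexample, minimum degree at least $8$, connectivity reductions, then a structural analysis of neighbourhoods using J{\o}rgensen's $K_8$ theorem), and you are right that the hard content is the final case analysis isolating $K_{2,2,2,3,3}$. But none of that appears in this paper, and the author explicitly notes in \cref{Intro} that the $k=2$ results are quoted rather than reproved.
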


Essentially the same method used above determines the maximum number of $k$-cliques in a $K_9$-minor-free graph for $k\geq 4$. 

\begin{thm}
\label{K9}
For  $k\in\{4,5,6,7,8\}$, the maximum number of $k$-cliques in a $K_9$-minor-free graph on $n\geq 7$ vertices equals
$$\binom{7}{k-1}\left(n-\frac{8(k-1)}{k}\right).$$
\end{thm}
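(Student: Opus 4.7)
The plan is to follow the proof of \cref{K8} almost verbatim, adapted to $t=9$ and using the trichotomy provided by \cref{SongThomas} in place of Jørgensen's theorem. The lower bound is \eqref{LowerBound}, realised by any $7$-tree. For the upper bound, I would induct on $n$ with $k$ fixed. The base cases $n\in\{7,8\}$ follow from \eqref{smalln} together with the identities $\binom{n}{k}=\binom{7}{k-1}(n-\tfrac{8(k-1)}{k})$ at $n=7$ and $n=8$ (routine from $\binom{n}{k}=\tfrac{n-k+1}{k}\binom{n}{k-1}$). For $n\geq 9$, \cref{SongThomas} splits $G$ into three subcases: either $|E(G)|\leq 7n-28$, or $G\cong K_{2,2,2,3,3}$, or $G$ is a $(K_{1,2,2,2,2,2},6)$-cockade.

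The two structured exceptions are handled directly. Since $K_{2,2,2,3,3}$ is $5$-partite it contains no $k$-cliques for $k\geq 6$, while for $k\in\{4,5\}$ one enumerates
$$\cliques(K_{2,2,2,3,3},5)=2\cdot 2\cdot 2\cdot 3\cdot 3=72,\qquad \cliques(K_{2,2,2,3,3},4)=3(2\cdot 2\cdot 3\cdot 3)+2(2\cdot 2\cdot 2\cdot 3)=156,$$
both well below the target value at $n=13$. For a $(K_{1,2,2,2,2,2},6)$-cockade, $K_{1,2,2,2,2,2}$ is not of the form $K_{c\times 2}$, so \cref{Cockade} does not apply; instead I would count
$$\cliques(K_{1,2,2,2,2,2},k)=\binom{5}{k-1}2^{k-1}+\binom{5}{k}2^k$$
by conditioning on whether the singleton part is used, then invoke \cref{Pasting} with $r=6$ and $a=\tfrac{1}{5}\bigl(\binom{5}{k-1}2^{k-1}+\binom{5}{k}2^k-\binom{6}{k}\bigr)$ to get a closed form linear in $n$. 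For $k\in\{4,5,6\}$ a slope-and-intercept comparison against $\binom{7}{k-1}(n-\tfrac{8(k-1)}{k})$ (valid for every $n\geq 11$) settles the bound, while for $k\in\{7,8\}$ there are no such cliques at all.

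The remaining case $|E(G)|\leq 7n-28$ is treated exactly as in \cref{MaderGen} and \cref{K8}. If some vertex $v$ has $\deg(v)\leq 7$, write $\cliques(G,k)=\cliques(G(v),k-1)+\cliques(G-v,k)$, bound the first term by $\binom{\deg(v)}{k-1}\leq\binom{7}{k-1}$ and the second by induction. Otherwise $\delta(G)\geq 8$; the fractional charging argument, bounding $\cliques(G(v),k-1)$ via \cref{K8} (applicable because $G(v)$ is $K_8$-minor-free and $k-1\in\{3,4,5,6,7\}$), gives, as in \eqref{CliquesEdges},
$$\cliques(G,k)\leq\frac{1}{k}\binom{6}{k-2}\left(2|E(G)|-\frac{7(k-2)n}{k-1}\right).$$
Substituting $|E(G)|\leq 7n-28$ and simplifying with $\tfrac{7}{k-1}\binom{6}{k-2}=\binom{7}{k-1}$ yields precisely $\binom{7}{k-1}(n-\tfrac{8(k-1)}{k})$.

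The main obstacle is bookkeeping rather than conceptual: the $(K_{1,2,2,2,2,2},6)$-cockade case is not subsumed by \cref{Cockade}, so one must explicitly compute $\cliques(K_{1,2,2,2,2,2},k)$ and verify the linear inequality for each admissible $(n,k)$. A secondary care point is confirming that \cref{K8} supplies the needed bound in the charging step, which requires $k-1\geq 3$ and hence matches the hypothesis $k\geq 4$ of the theorem.
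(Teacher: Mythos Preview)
Your proposal is correct and follows essentially the same route as the paper's own proof: induct on $n$, dispose of the two structured exceptions from \cref{SongThomas} by direct clique counts (using \cref{Pasting} for the $(K_{1,2,2,2,2,2},6)$-cockade exactly as you describe), and otherwise use the low-degree-vertex deletion or the charging argument with \cref{K8} when $|E(G)|\leq 7n-28$. One harmless arithmetic slip: $K_{2,2,2,3,3}$ has $n=12$ vertices, not $13$; your clique counts $156$ and $72$ and the comparison to the target are unaffected.
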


\begin{proof}
The lower bound is provided by \eqref{LowerBound}. We proceed by induction on $n$ with $k$ fixed. In the base case with $n\in\{7,8\}$, the result holds by \eqref{smalln}. Let $G$ be a $K_9$-minor-free graph on $n\geq 9$ vertices. 

Say $G$ is a $(K_{1,2,2,2,2,2},6)$-cockade. 
Since $\cliques(K_{1,2,2,2,2,2},4)=1\cdot\binom{5}{3}\cdot2^3+\binom{5}{4}2^4=160$, by \cref{Pasting} with $r=6$ and $k=4$ 
we have $\cliques(G,4)=29(n-6)+\binom{6}{4}< 35(n-6)=\binom{7}{k-1}(n-\frac{8(k-1)}{k})$. 
Since $\cliques(K_{1,2,2,2,2,2},5)=1\cdot\binom{5}{4}\cdot 2^4+2^5=112$, by \cref{Pasting} with $r=6$ and $k=5$ we have 
$\cliques(G,5)=\frac{106}{5}(n-6)+6< 35(n-\frac{32}{5})= \binom{7}{k-1}(n-\frac{8(k-1)}{k})$. 
Since $\cliques(K_{1,2,2,2,2,2},6)=2^5=32$, by \cref{Pasting} with $r=k=6$  we have 
$\cliques(G,6)= \frac{31}{5}(n-6)+1\leq 21(n-\frac{20}{3})= \binom{7}{k-1}(n-\frac{8(k-1)}{k})
$. For $k\in\{7,8\}$ we have $\cliques(G,k)=0$.

If $G\cong K_{2,2,2,3,3}$, then $\cliques(G,4)=3\cdot2^2\cdot 3^2+2\cdot2^3\cdot 3=156<210=\binom{7}{3}(12-\frac{8\cdot 3}{4})$ 
and $\cliques(G,5)=2^3\cdot3^2< 196=\binom{7}{4}(12-\frac{8\cdot 4}{5})$. For $k\in\{6,7,8\}$ we have $\cliques(G,k)=0$.

We may now assume that $G$ is not a $(K_{1,2,2,2,2,2},6)$-cockade and  $G\not\cong K_{2,2,2,3,3}$. By \cref{SongThomas}, $|E(G)|\leq 7n-28$. 

The remainder of the proof is analogous to the proof of Theorem~\ref{MaderGen}, so we sketch it briefly. 
First, delete a vertex of degree at most 6 and apply induction. Now assume minimum degree at least 7. 
Charge each $k$-clique to its vertices, and count the charge at each vertex $v$ with respect to $\deg(v)$ and the number of $(k-1)$-cliques in $G(v)$, which is $K_8$-minor-free (applying Theorem~\ref{K8} since $k-1\geq 3$). 
Counting the total charge, \eqref{CliquesEdges} gives
$$\cliques(G,k) \leq \frac{1}{k}\binom{6}{k-2}\left( 2|E(G)|-\frac{(k-2)(7)n}{k-1}\right)$$
Since $|E(G)|\leq 7n-28$, it follows by the same analysis used in 
the proof of Theorem~\ref{MaderGen} that  $\cliques(G,k)\leq\binom{7}{k-1}(n-\frac{8(k-1)}{k})$. 
\end{proof}

With a bit more work, we now determine the maximum number of triangles in a $K_9$-minor-free graph. 
%
%Every $(K_{2,2,2,2,2},5)$-cockade on $n$ vertices has  $6n-20$ edges and $14n-60$ triangles
%Let $G$ be obtained by adding one dominant vertex. Thus $G$ has $n+1$ vertices and $14n-60+6n-20=20n-80=20(n+1)-100$ triangles. 

\begin{thm}
\label{K9triangles}
Every $K_9$-minor-free graph on $n\geq 7$ vertices contains at most $21n-112$ triangles, except for $K_{1,2,2,2,2,2}$ which has $120=21n-111$ triangles. 
\end{thm}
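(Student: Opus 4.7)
I plan to induct on $n$, paralleling the proof of \cref{K9} but with extra care at $k=3$ because the edge bound for $K_8$-minor-free graphs (\cref{Jorgensen}) has a cockade exception. The lower bound is \eqref{LowerBound} with $t=9$, $k=3$; the exceptional value $120$ for $K_{1,2,2,2,2,2}$ is a direct count via the complete multipartite triangle formula ($\binom{5}{2}\cdot 2^2=40$ triangles through the apex vertex, plus $\binom{5}{3}\cdot 2^3=80$ triangles inside $K_{2,2,2,2,2}$). The base cases $n\in\{7,8\}$ are covered by $K_n$.

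For the inductive step on $G$ with $n\geq 9$ vertices, I would use \cref{SongThomas} to split into three cases. If $G$ is a $(K_{1,2,2,2,2,2},6)$-cockade, then \cref{Pasting} with $r=6$, $k=3$, and $a=20$ (using $\cliques(K_{1,2,2,2,2,2},3)=120$) gives $\cliques(G,3)=20(n-6)+20$, which equals $120$ at the single-copy case $n=11$ (the stated exception) and is at most $21n-112$ for $n\geq 16$. If $G\cong K_{2,2,2,3,3}$, a direct multipartite count yields $134\leq 140=21\cdot 12-112$. Otherwise $|E(G)|\leq 7n-28$: here if $\delta(G)\leq 6$ I delete a vertex $v$ of degree at most $6$ and apply induction to $G-v$; the only nontrivial subcase is $G-v\cong K_{1,2,2,2,2,2}$, forcing $n=12$ and giving $\cliques(G,3)\leq 120+\binom{6}{2}=135\leq 140$.

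The substantive case is $\delta(G)\geq 7$. Following \cref{K9}, I charge each triangle equally to its three vertices so that $3\cliques(G,3)=\sum_v|E(G(v))|$. Each $G(v)$ is $K_8$-minor-free, so by \cref{Jorgensen}, either $|E(G(v))|\leq 6\deg(v)-21$, or $G(v)$ is a $(K_{2,2,2,2,2},5)$-cockade with $|E(G(v))|=6\deg(v)-20$. Writing $S$ for the set of vertices of the latter kind, summing and combining with $|E(G)|\leq 7n-28$ gives $\cliques(G,3)\leq 21n-112+|S|/3$; since $\cliques(G,3)$ is an integer, this already yields the desired bound whenever $|S|\leq 2$.

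The principal obstacle is thus to prove that $S$ is small (in fact empty) in the present case. If $v\in S$, then $G[v]=K_1*G(v)$ is itself a $(K_{1,2,2,2,2,2},6)$-cockade, so $V(G)\neq N[v]$ (else $G$ would be a cockade, contradicting the present case). Picking $w\in V(G)\setminus N[v]$, minimum degree gives $\deg(w)\geq 7$. The crux is to extend the $K_8$-minor in $G[v]$ (obtained by combining $v$ as one branch set with the $K_7$-minor of $K_{2,2,2,2,2}$ inside $G(v)$) into a full $K_9$-minor using $w$, contradicting $K_9$-minor-freeness. The intended construction takes one representative per pair of $K_{2,2,2,2,2}$ as singleton branch sets, contracts cross-part pairs to absorb the remaining vertices, and uses $w$ (linked to $v$ through a common neighbour in $N(v)\cap N(w)$ when necessary) as the ninth branch set; carrying this out across all configurations of $w$'s neighbourhood and all sizes of the cockade $G(v)$ is the most intricate portion of the proof.
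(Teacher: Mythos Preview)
Your overall architecture matches the paper's: induct on $n$, dispose of $(K_{1,2,2,2,2,2},6)$-cockades and $K_{2,2,2,3,3}$ directly, delete a low-degree vertex when possible, and otherwise run the charging argument with J{\o}rgensen's bound on each $G(v)$, isolating the ``bad'' set $S$ of vertices whose neighbourhood is a $(K_{2,2,2,2,2},5)$-cockade. The divergence is in how you propose to kill $S$, and there your plan has a real gap.

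You assume that from $v\in S$ and $w\notin N[v]$ with $\deg(w)\geq 7$ you can build a $K_9$-minor by hand, using $w$ ``linked to $v$ through a common neighbour in $N(v)\cap N(w)$''. But nothing so far forces $N(v)\cap N(w)\neq\emptyset$; all seven neighbours of $w$ may lie in $V(G)\setminus N[v]$. More generally, the connected component $C$ of $G-N[v]$ containing $w$ may attach to $N(v)$ along a set $N(C)$ that is a clique, and since a $(K_{2,2,2,2,2},5)$-cockade contains no $K_6$, this forces $|N(C)|\leq 5$. In that situation no amount of case analysis on ``configurations of $w$'s neighbourhood'' will produce a ninth branch set adjacent to all eight branch sets inside $N[v]$. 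The paper handles exactly this obstruction by inserting an additional case \emph{before} the charging step: if $G$ has a separation of order at most $5$, split and apply induction to both sides (minimum degree $\geq 8$ keeps both sides large enough). Only after this may one assume $G$ is $6$-connected, which guarantees $N(C)$ is not a clique and hence contains non-adjacent $x,y$.

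The second idea you are missing is that once $N(C)$ contains non-adjacent $x,y$, you do not need any explicit branch-set construction at all. Contract $C$ into $x$; this adds the edge $xy$ to $G(v)$. Since every $(K_{2,2,2,2,2},5)$-cockade is edge-maximal $K_8$-minor-free, $G(v)+xy$ contains a $K_8$-minor, and together with $v$ this yields a $K_9$-minor in $G$ --- a clean contradiction that works uniformly over all cockade sizes. With these two ingredients (the order-$\leq 5$ separation case and the edge-maximality trick) the ``most intricate portion'' you anticipate simply evaporates. Incidentally, the paper deletes a vertex of degree at most $7$ rather than $6$; either threshold works for the deletion step, but the paper's choice gives $\delta(G)\geq 8$, which is what makes both sides of a $\leq 5$-separation have at least $9$ vertices.
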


\begin{proof}
We proceed by induction on $n\geq 7$. Let $G$ be a $K_9$-minor-free graph with $n$ vertices. Assume the result holds for such graphs with less than $n$ vertices. If $n=7$ then $G$ contains at most $\binom{7}{3}=35=21 \cdot 7 - 112$ triangles. Now assume that $n\geq 8$. 

Case 1. $G\cong K_{2,2,2,3,3}$: Then $n=12$ and $G$ has $134< 21 n - 112$ triangles. Now assume that $G\not\cong K_{2,2,2,3,3}$

Case 2. $G\cong K_{1,2,2,2,2,2}$: Then $G$ contains $\binom{5}{3}2^3=80$ triangles that avoid the dominant vertex, and $40$ triangles that include the dominant vertex.  Thus $G$ contains $120=21\cdot 11-111$ triangles, as claimed. Now assume that $G\not\cong K_{1,2,2,2,2,2}$.  

Case 3. $G$ is a $(K_{1,2,2,2,2,2},6)$-cockade: Then $n>11$ as otherwise Case 2 applies. Using the calculation in Case 2, it follows from \cref{Pasting} that $G$ contains $20n-100$ triangles, which is less than $21n-111$ since $n>11$. Now assume that 
$G$ is not a $(K_{1,2,2,2,2,2},6)$-cockade.

Case 4. $\deg(v)\leq 7$ for some vertex $v$ in $G$: 
First suppose that $G-v\not \cong K_{1,2,2,2,2,2}$. 
By induction, $G-v$ contains at most $21(n-1)-112$ triangles. 
The number of triangles that include $v$ equals $|E(G(v))|$, which is at most $\binom{7}{2}= 21$.  
In total, $G$ contains at most $21n-112$ triangles. 
Now assume that $G-v\cong K_{1,2,2,2,2,2}$, which contains 120 triangles. 
In this case, $|E(G(v))|\leq \binom{7}{2}-1= 20$ since $K_{1,2,2,2,2,2}$ contains no $K_7$-subgraph.
In total, $G$ contains at most $120+20= 21 n-112$ triangles (since $n=12$), as claimed. 
Now assume that $G$ has minimum degree at least 8. 

Case 5. $G$ has a separation $(G_1,G_2)$ of order at most 5: That is, $G=G_1\cup G_2$ and $G\neq G_1$ and $G\neq G_2$ and $|V(G_1\cap G_2)|\leq 5$. 
Let $n_i:=|V(G_i)|$. Since each vertex in $G_1-V(G_2)$ has degree at least 8, $n_1\geq 9$. Similarly $n_2\geq 9$. By induction, $\cliques(G_i,3)\leq 21n_i -111$. Since $\cliques(G,3)\leq \cliques(G_1,3)+\cliques(G_2,3)$,
\begin{align*}
\cliques(G,3)\leq 21(n_1+n_2)-222 &\leq 21(n+5) -222 = 21n - 117 < 21 n - 112,
\end{align*} as desired.  Now assume that $G$ is 6-connected. 

Say $G$ has $m$ edges. By \cref{K9trianglesEdges} below, $G$ contains at most $4m-7n$ triangles, which is at most $4(7n-28)-7n=21n-112$ by \cref{SongThomas} (which is applicable since  $G$ is neither a $(K_{1,2,2,2,2,2},6)$-cockade nor isomorphic to $K_{2,2,2,3,3}$).
\end{proof}

\begin{lem}
\label{K9trianglesEdges}
Every 6-connected $n$-vertex $m$-edge $K_9$-minor-free graph contains at most $4m-7n$ triangles.
\end{lem}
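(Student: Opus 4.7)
The plan is to adapt the vertex-charging argument from the proofs of \cref{MaderGen} and \cref{K8}. Send a charge of $\tfrac13$ from each triangle of $G$ to each of its three vertices, so that
\[
3\,\cliques(G,3) \;=\; \sum_{v\in V(G)} |E(G(v))|.
\]
For each $v$, the neighbourhood graph $G(v)$ is $K_8$-minor-free, since otherwise $v$ together with a $K_8$-minor of $G(v)$ would form a $K_9$-minor in $G$; and $\deg(v)\ge 6$ by 6-connectivity. Hence \cref{Jorgensen} gives either $|E(G(v))|\le 6\deg(v)-21$, or $G(v)$ is a $(K_{2,2,2,2,2},5)$-cockade with $|E(G(v))|=6\deg(v)-20$.

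If no $G(v)$ is such a cockade, summing the bounds gives $3\,\cliques(G,3)\le 12m-21n$, which is the required inequality. Otherwise fix $v$ with $G(v)$ a $(K_{2,2,2,2,2},5)$-cockade. Then $G[v]$ is a $(K_{1,2,2,2,2,2},6)$-cockade with $v$ as a common dominant vertex of every piece. If $G=G[v]$, then $G$ itself is such a cockade: from \cref{Pasting} with $\cliques(K_{1,2,2,2,2,2},3)=120$ one obtains $\cliques(G,3)=20n-100$, while the cockade structure gives $m=7n-27$, so $\cliques(G,3)\le 21n-108=4m-7n$ since $n\ge 11$.

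The remaining possibility $G\ne G[v]$ is the main obstacle; the plan is to derive a contradiction to $K_9$-minor-freeness by constructing an explicit $K_9$-minor in $G$. Pick $w\in V(G)\setminus V(G[v])$; since $v$ has no neighbours outside $V(G[v])$, Menger's Theorem together with the 6-connectivity of $G$ provides at least six internally-disjoint $vw$-paths, and hence at least six ``boundary'' vertices of $N(v)$ with neighbours in $V(G)\setminus V(G[v])$. Pigeonhole on the five size-$2$ parts of a $K_{2,2,2,2,2}$-piece of $G(v)$ then yields a part $\{a_1,a_2\}$ both of whose vertices have external neighbours. Using the $K_8$-minor decomposition of $K_{1,2,2,2,2,2}$ formed by the six singletons $\{v\},\{a_1\},\{b_1\},\{c_1\},\{d_1\},\{e_1\}$ together with the triple $\{a_2,b_2,c_2\}$ and pair $\{d_2,e_2\}$, I would remove $a_2$ from the triple and form a ninth branch set containing $a_2$, an external neighbour of $a_1$, an external neighbour of $a_2$, a connecting path through $V(G)\setminus V(G[v])$, and the vertex $w$; the critical edge to the singleton $\{a_1\}$ is then supplied by the external neighbour of $a_1$ lying in this ninth branch set, while all other required adjacencies (between the eight branch sets that remain inside $G[v]$, and from each of them to the ninth) follow from the structure of $K_{1,2,2,2,2,2}$. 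The delicate technical step is guaranteeing that the external neighbours of $a_1$ and $a_2$ (and $w$) can always be joined into a single connected branch set inside $V(G)\setminus V(G[v])$; this is where the 6-connectivity is essential, ensuring sufficient external structure, possibly supplemented by further boundary vertices provided by the pigeonhole argument.
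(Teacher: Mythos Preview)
Your charging argument and the treatment of the case $G=G[v]$ match the paper's proof. The genuine gap is in the case $G\ne G[v]$. Your pigeonhole step presumes that the six boundary vertices of $N(v)$ can be located inside a single $K_{2,2,2,2,2}$-piece of the cockade $G(v)$, but this can fail. For instance, if $G(v)$ is two copies of $K_{2,2,2,2,2}$ pasted on a $5$-clique $Q$, then each set $P_i\setminus Q$ is itself a $5$-clique, and the six boundary vertices could be three from $P_1\setminus Q$ and three from $P_2\setminus Q$. No part $\{a_1,a_2\}$ of either piece then has both vertices among the boundary set, so your explicit branch-set construction---which is tailored to the five parts of one piece---does not apply. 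The non-adjacent boundary pairs that do exist (one vertex from each $P_i\setminus Q$) are not the two halves of a part, and you have no mechanism for exploiting them.

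The paper avoids the explicit construction entirely by using that every $(K_{2,2,2,2,2},5)$-cockade is \emph{edge-maximal} $K_8$-minor-free (immediate from \cref{Jorgensen}: the cockade has $6|N(v)|-20$ edges, so adding any edge exceeds the bound). One takes a component $C$ of $G-N[v]$; by $6$-connectivity $|N(C)|\ge 6$, and since the cockade contains no $K_6$, two vertices $x,y\in N(C)$ are non-adjacent. Contracting $C$ onto $x$ makes $xy$ an edge inside $N(v)$; edge-maximality then forces a $K_8$-minor there, and together with $v$ a $K_9$-minor in $G$. This argument is indifferent to the size of the cockade and replaces your hand-built minor with a one-line appeal to J\o rgensen's characterisation. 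Incidentally, working with a single component $C$ rather than Menger paths also dissolves your ``delicate'' connectivity worry (the set $C\cup\{a_2\}$ would have served as a connected ninth branch set), but that does not rescue the pigeonhole step.
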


\begin{proof}
First suppose $G$ contains a vertex $v$ with $G(v)$ isomorphic to a $(K_{2,2,2,2,2},5)$-cockade. 

Say $G-N[v]$ is empty. By \cref{Cockade}, $G(v)$ has $6n-26$ edges and $14n-74$ triangles. Thus $G$ has $m=7n-27$ edges and $20n-100$ triangles. Since $20n-100 \leq 4(7n-27) - 7n$, we are done. Now assume that $G-N[v]$ is not empty. Let $C$ be a connected component of $G-N[v]$. 
Then $N(C)\subseteq N(v)$. 

If $N(C)$ is a clique, then $|N(C)|\leq 5$ (since no $(K_{2,2,2,2,2},5)$-cockade contains $K_6$) and $G$ is not 6-connected, which is a contradiction. 
Thus $N(C)$ contains two non-adjacent vertices $x$ and $y$. 
Let $G'$ be obtained from $G$ by contracting $C$ to a vertex $z$ and then contracting $zx$. 
Then $xy$ is an edge of $G'$. Since every $(K_{2,2,2,2,2},5)$-cockade is edge-maximal with no $K_8$-minor, 
$G'[N(v)]$ contains a $K_8$-minor, and (with $v$) $G'$ contains a $K_9$-minor. Hence $G$ contains a $K_9$-minor, which is a contradiction. 

Now assume that $G(v)$ is isomorphic to a $(K_{2,2,2,2,2},5)$-cockade for no vertex $v$. Send a charge of $\frac{1}{3}$ from each triangle to each of the three vertices in it. Each vertex $v$ receives a charge equal to $\frac{1}{3}|E(G(v))|$, which is at most $2\deg(v)-7$ by
 \cref{Jorgensen} (which is applicable since $\deg(v)\geq 6$ and $G(v)$ is $K_8$-minor-free). The number of triangles, which equals the total charge, is at most $\sum_v(2\deg(v)-7)=4m-7n$, as desired. 
\end{proof}

\begin{thm}
The maximum number of cliques in a  $K_9$-minor-free graph on $n\geq 7$ vertices equals $128(n-6)$. 
\end{thm}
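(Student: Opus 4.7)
The lower bound $128(n-6)=2^{7}(n-9+3)$ comes from \eqref{TotalLowerBound}, realized by $7$-trees. For the upper bound, the plan is to add up the known bounds on $\cliques(G,k)$ for $k=0,1,\dots,8$, after first handling the few graphs that are exceptional for $\cliques(n,9,k)$.

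First I would dispose of the three exceptional families flagged in \cref{SongThomas,K9triangles} by direct computation. For $G\cong K_{1,2,2,2,2,2}$ (so $n=11$) and $G\cong K_{2,2,2,3,3}$ (so $n=12$), summing the $k$-clique counts already worked out in the proof of \cref{K9} gives totals of $486$ and $432$ respectively, both comfortably below $128(n-6)$. For a $(K_{1,2,2,2,2,2},6)$-cockade on $n\geq 11$ vertices, I would apply \cref{TotalPasting} with $r=6$ using the base value $\cliques(K_{1,2,2,2,2,2})=486$ to obtain a formula of shape $a(n-6)+64$ with $a=\tfrac{422}{5}<128$, which stays strictly below $128(n-6)$.

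In the remaining case, $G$ falls under none of these exceptions, so \cref{SongThomas} gives $\cliques(G,2)\leq 7n-28$, \cref{K9triangles} gives $\cliques(G,3)\leq 21n-112$, and \cref{K9} gives $\cliques(G,k)\leq\binom{7}{k-1}(n-\tfrac{8(k-1)}{k})$ for $k\in\{4,\dots,8\}$. Summing these together with $\cliques(G,0)=1$ and $\cliques(G,1)=n$, the coefficient of $n$ equals $1+7+21+35+35+21+7+1=128$ and the constants sum to $1-28-112-210-224-140-48-7=-768$, giving exactly $128(n-6)$, the same telescoping identity used at the end of the proof of \cref{Total7}.

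The main obstacle is not structural but arithmetic: verifying both that the sum collapses cleanly to $128(n-6)$, and that each exceptional family, although slightly denser than generic $K_9$-minor-free graphs in either edges or triangles, nevertheless falls short of this total.
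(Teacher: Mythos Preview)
Your proposal is correct and follows essentially the same approach as the paper: first dispose of the exceptional graphs (the $(K_{1,2,2,2,2,2},6)$-cockades via \cref{TotalPasting} with $a=\tfrac{422}{5}$, and $K_{2,2,2,3,3}$ directly), then in the generic case sum the bounds from \cref{SongThomas}, \cref{K9triangles}, and \cref{K9} over $k=0,\dots,8$ to obtain exactly $128(n-6)$. The only cosmetic difference is that you single out $K_{1,2,2,2,2,2}$ separately, whereas the paper absorbs it into the cockade case (it being the base cockade); this is harmless redundancy.
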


\begin{proof}
Let $G$ be a $K_9$-minor-free graph on $n\geq 7$ vertices. 

First suppose that $G$ is isomorphic to a $(K_{1,2,2,2,2,2},6)$-cockade. 
Note that $\cliques(K_{1,2,2,2,2,2})=2\cdot 3^5=486=\frac{422}{5}(11-6)+2^6$. By \cref{TotalPasting} with $a=\frac{422}{5}$ and $r=6$, every $n$-vertex 
$(K_{1,2,2,2,2,2},6)$-cockade contains $\frac{422}{5}(n-6)+64\leq 128(n-6)$ cliques. 
Now assume that $G$ is isomorphic to no $(K_{1,2,2,2,2,2},6)$-cockade.

Now suppose that $G\cong K_{2,2,2,3,3}$. Then $\cliques(G)=3^3\cdot 4^2=432<128\cdot 6=128(n-6)$. Now assume that $G\not\cong K_{2,2,2,3,3}$.

By \cref{SongThomas} and \cref{K9} and \cref{K9triangles}, we have
$\cliques(G,k)\leq \binom{7}{k-1}(n-\frac{8(k-1)}{k})$ for $k\in\{1,2,\dots,8\}$. 
Since $\cliques(G,0)=1$, we have  $\cliques(G)\leq 128(n-6)$.
\end{proof}

%%%%%%%%%
\section{Total Number of Cliques in $K_t$-minor-free Graphs}
\label{KtTotal}

This section considers the total number of cliques in $K_t$-minor-free graphs for arbitrary $t$. Recall that $\cliques(n,t)$ is the maximum number of cliques in a $K_t$-minor-free graph on $n$ vertices. 
The best lower bound on $\cliques(n,t)$ is due to \citet{Wood-GC07}, who observed that $K_{c\times 2}$ contains no $K_t$-minor where $t=\floor{\tfrac32c}+1$. Thus 
\begin{align}
\label{Kc2}
\cliques(n,t)\geq \cliques(K_{c\times 2})=3^c=2^{2(\log_23)t/3-o(t)}n\geq 2^{1.0566t-o(t)}n.
\end{align}
Upper bounds on $\cliques(n,t)$ have been intensely studied over the past ten years, culminating in the recent upper bound by \citet{FW16} that matches the lower bound  in \eqref{Kc2} up to a lower order term. These results are summarised in the following table. 
\begin{center}
\begin{tabular}{ll}
\hline
$\cliques(n,t)  \leq$ & reference\\
\hline
$(ct\sqrt{\log t})^tn$ & \citet{NSTW-JCTB06}\\
$2^{ct\sqrt{\log t}}n$ & \citet{ReedWood-TALG}\\
$2^{ct\log\log t}n$& \citet{FOT}\\
$2^{50t}n$ & \citet{LO15} \\
$2^{5t+o(t)}n$ & \citet{LO15}\\
$2^{2(\log_23)t/3+o(t)}n$ & \citet{FW16}\\
\hline
\end{tabular}
\end{center}

Note that several authors have also studied the maximum number of cliques in graphs excluding a given subdivision \citep{LO15,FW16a} or immersion \citep{FW16a}. 

%Improving on previous bounds in \citep{ReedWood-TALG,NSTW-JCTB06,FOT}, the best upper bounds are due to \citet{LO15}, who proved that 
%\begin{align}
%\label{LO}
%\cliques(n,t)  \leq 2^{50t}n\quad\text{ and }\quad
%\cliques(n,t)  \leq 2^{5t+o(t)}n.
%\end{align}
%In fact, \citet{LO15} proved these bounds for $K_t$-subdivision-free graphs. 

The remainder of this section considers the following question: what is the maximum integer $t_0$ such that $\cliques(n,t)=2^{t-2}(n-t+3)$ for all $t\leq t_0$ (thus matching the lower bound in \eqref{TotalLowerBound})?  \cref{SummaryTotal} shows that $t_0 \geq 9$.

Given that $K_{c\times 2}$ provides an essentially tight lower bound on $\cliques(n,t)$, we now examine complete multipartite graphs in more detail. Consider a complete $c$-partite graph $G=K_{n_1,\dots,n_c}$ where $n_1\geq\dots\geq n_c\geq 1$ and $c\geq 2$. Then $n=\sum_{i=1}^cn_i$ is the number of vertices. \citet{Wood-GC07} proved that $G$ contains no $K_t$-minor, where 
$$t=\min\left\{\left\lfloor{\frac12 (n+c)}\right\rfloor+1,n-n_1+2\right\},$$ and a $K_{t-1}$-minor in $G$ can be obtained from a $K_c$ subgraph by contracting a maximum matching in the remaining graph. If $\floor{\frac12 (n+c)}\leq n-n_1+1$ then we say $G$ is \emph{balanced}, otherwise $G$ is \emph{unbalanced} (in which case the largest colour class is `very' large). First suppose that $G$ is unbalanced. Let $m:=\frac{n-n_1}{c-1}$ be the average size of a colour class except the largest colour class. Then $m\geq 1$ and $m+1\leq 2^m$. Thus 
$$\cliques(G)=\prod_{i=1}^c(n_i+1) 
\leq (n_1+1)(m+1)^{c-1}
\leq (n_1+1)2^{m(c-1)}
= (n-t+3)2^{t-2}.$$
That is, every unbalanced complete multipartite graph satisfies the bound. Now consider the case in which $G$ is balanced. Let $m:=\frac{n}{c}$ be the average size of a colour class. 
Then $t-2\geq \frac12(n+c-1)-1=\frac12(c(m+1)-3)$ and $n-t+3\geq n-\frac12 (n+c)+3 =\frac12(n-c)+3=\frac12 c(m-1)+3.$ 
Assume that $m\geq 3$. Then $(m+1)^{2}\leq 2^{m+1}$ and $(m+1)^{c}\leq 2^{c(m+1)/2}$. Since $\half c(m-1)+3 > 2^{3/2}$, 
$$2^{3/2}(m+1)^c\leq 2^{c(m+1)/2}( \half c(m-1)+3)  \leq 2^{c(m+1)/2}( n-t+3)$$
and
$$\cliques(G)=\prod_{i=1}^c(n_i+1) \leq (m+1)^c \leq 2^{(c(m+1)-3)/2}( n-t+3)\leq 2^{t-2}( n-t+3).$$
That is, balanced complete multipartite graphs with an average of at least three vertices per colour class satisfy the bound. Thus, if a complete multipartite graph has more than $2^{t-2}(n-t+3)$ cliques, then it is balanced and has an average of less than three vertices per colour class. This is why $K_{c\times 2}$ is a critical example. Computer search establishes that for $t\leq 49$ every such complete multipartite graph has at most $2^{t-2}(n-t+3)$ cliques, but for $t\geq 50$ there is a value of $c$ such that $K_{c\times 2}$ or $K_{1,c\times 2}$ or $K_{1,1,c\times 2}$ has no $K_t$-minor and contains more than $2^{t-2}(n-t+3)$ cliques. Indeed $K_{c\times 2}$ satisfies this property for $t\geq 62$. We therefore make the following conjecture.

\begin{conj}
$\cliques(n,t)=2^{t-2}(n-t+3)$ if and only if  $t\leq 49$. 
\end{conj}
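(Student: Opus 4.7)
The plan is to prove the two directions separately and to lean on the complete multipartite analysis already carried out in the paragraphs preceding the conjecture.

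For the direction $t\geq 50\Rightarrow \cliques(n,t) > 2^{t-2}(n-t+3)$ for some $n$, the idea is to exhibit explicit complete multipartite counterexamples. Using the formula of \citet{Wood-GC07} characterizing the complete-graph minors in a complete multipartite graph, one verifies that for each $t\in\{50,\ldots,61\}$ some graph of the form $K_{c\times 2}$, $K_{1,c\times 2}$, or $K_{1,1,c\times 2}$ is $K_t$-minor-free yet has more than $2^{t-2}(n-t+3)$ cliques; for $t\geq 62$, $K_{c\times 2}$ alone suffices (the lower bound $3^c=2^{(2\log_23)t/3-o(t)}n$ from \eqref{Kc2} eventually dominates $2^{t-2}(n-t+3)$, and one confirms the precise crossover at $t=62$ by direct calculation). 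This direction reduces to a finite verification of the type already performed in the text.

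For the direction $t\leq 49\Rightarrow \cliques(n,t)=2^{t-2}(n-t+3)$, the plan is to follow the template of Theorems~\ref{K8} and \ref{K9}. The base cases $t\leq 9$ are handled by \cref{SummaryTotal}. For each $t\in\{10,\ldots,49\}$, induct on $n$: if $G$ contains a vertex $v$ of degree at most $t-2$, split $\cliques(G)=\cliques(G(v))+\cliques(G-v)$, apply induction on $t$ to the $K_{t-1}$-minor-free graph $G(v)$ and on $n$ to $G-v$, and sum. If the minimum degree is at least $t-1$, charge each $k$-clique to its vertices, bound $\cliques(G(v),k-1)\leq\cliques(\deg(v),t-1,k-1)$ by induction on $t$, sum over $k$ via the identity used in \cref{Total7}, and reduce the problem to the edge bound, exactly as in \eqref{CliquesEdges}. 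Extremal configurations for the edge count (analogues of $K_{2,2,2,2,2}$-cockades and $K_{1,2,2,2,2,2}$-cockades) must then be checked directly to contain at most $2^{t-2}(n-t+3)$ cliques, using \cref{Cockade} and the multipartite estimates already established.

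The main obstacle — and the reason this stays a conjecture — is that the induction above requires, for every $t\in\{10,\ldots,49\}$, an exact extremal function for $K_t$-minors together with a characterization of the extremal graphs, so that the contribution of the exceptional examples can be checked case by case. Currently such characterizations are available only for $t\leq 9$ (Dirac, Mader, Jørgensen, Song--Thomas), and extending them even to $t=10$ is already a substantial open problem in extremal minor theory. A potentially cleaner route that would sidestep a case-by-case extremal theorem is to prove a structural reduction of the following shape: every $K_t$-minor-free graph on $n$ vertices with strictly more than $2^{t-2}(n-t+3)$ cliques must either be a complete multipartite graph or admit a small separation into pieces of the same kind (so that \cref{TotalPasting} applies); combined with the multipartite analysis already carried out in the excerpt, this would collapse the whole conjecture to the finite verification of the balanced multipartite case, which is exactly what the computer search is reported to handle up to $t=49$. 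Producing such a structural reduction without first resolving the edge-extremal problem for $K_t$-minors is, in my view, the genuine difficulty.
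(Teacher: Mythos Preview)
The statement is a conjecture, and the paper offers no proof --- only the evidence assembled in the paragraph preceding it. Your proposal is accordingly not a proof but a strategy sketch with an explicitly acknowledged gap, which is the honest status of the problem.

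For the direction $t\geq 50$, your plan coincides with the paper's own evidence: the finite computer search showing that for each $t\in\{50,\dots,61\}$ one of $K_{c\times 2}$, $K_{1,c\times 2}$, $K_{1,1,c\times 2}$ is $K_t$-minor-free with more than $2^{t-2}(n-t+3)$ cliques, and that $K_{c\times 2}$ alone suffices for $t\geq 62$. This direction is indeed settled by the finite verification already recorded in the text.

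For the direction $t\leq 49$, you correctly locate the obstruction, but it is slightly worse than you state. The inductive step you outline for the case of minimum degree at least $t-1$ requires not merely the total-clique bound for $K_{t-1}$-minor-free graphs but the per-$k$ bound $\cliques(n,t-1,k-1)\leq\binom{t-3}{k-2}\bigl(n-\tfrac{(k-2)(t-2)}{k-1}\bigr)$ for every $k$, since that is what feeds into \eqref{CliquesEdges} before one sums via the identity in \cref{Total7}. Already for $t\in\{8,9\}$ this per-$k$ bound fails for small $k$ (the $+1$ exceptions in \cref{Summary}), and the total-clique results in the paper survive only because the exceptional configurations are handled separately and shown to be non-extremal for the total. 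Extending the method through $t\leq 49$ would therefore require controlling all such exceptions on top of the unknown edge-extremal characterizations. Your proposed ``structural reduction'' to complete multipartite graphs is an interesting alternative that would dovetail with the paper's multipartite computer search, but the paper does not pursue it and it remains speculative.
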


%It follows that $\cliques(K_{c\times 2})\leq 2^{t-2}(n-t+3)$ if and only if $c\in\{1,\dots,32\}\cup\{34,36,38,40\}$. 
%%For $c=40$ we have $t=61$. Thus $\cliques(n,t)>2^{t-2}(n-t+3)$ for $t\geq 62$. 
%A simple calculation shows that for $t\in\{50,53,56,59,62,64,66,67,68,70,71,73\}\cup\{74,75,\dots\}$, if $c$ is the maximum integer such that $\floor{\frac{3c}{2}}+1\leq t$, then $K_{c\times 2}$ is an $n$-vertex $K_t$-minor-free graph with greater than $2^{t-2}(n-t+3)$ cliques. It is tempting therefore to conjecture that $\cliques(n,t)=2^{t-2}(n-t+3)$ for $t\in\{1,2,\dots,49\}\cup\{51,52,54,55,57,58,60,61,63,69,72\}$. 
%
%A simple calculation shows that for $t\in\{51,54,57,60,63,65,66,68,69,71,72,74,75\}\cup\{77,78,\dots\}$, if $c$ is the maximum integer such that $\floor{\frac{3c}{2}}+2\leq t$, then $K_{1,c\times 2}$ is an $n$-vertex $K_t$-minor-free graph with greater than $2^{t-2}(n-t+3)$ cliques. 
%

%%%%%%%%%
\section{Number of $k$-Cliques in $K_t$-minor-free Graphs}
\label{KtSpecific}

This section considers the maximum number of $k$-cliques in $K_t$-minor-free graphs. First note that \cref{Mader} fails badly for large $t$. In particular,  \citet{Kostochka82, Kostochka84} and de la Vega~\cite{delaVega} (based on the work of \citet{BCE80}) proved that $\cliques(n,t,2)\geq c_1t\sqrt{\log t}n$ for some constant $c_1>0$. Conversely, \citet{Kostochka82, Kostochka84} and \citet{Thomason84} proved that $\cliques(n,t,2)\leq c_2t\sqrt{\log t}n$  for some constant $c_2>0$. Later, \citet{Thomason01} proved that $\cliques(n,t,2)=(\alpha +o(1))nt\sqrt{\ln t}$, where $\alpha\approx 0.319$ is precisely determined.

A graph is \emph{$d$-degenerate} if every subgraph has minimum degree at most $d$. \citet{Wood-GC07} determined the maximum total number of cliques in a $d$-degenerate graph. Essentially the same proof determines the maximum number of $k$-cliques. 

\begin{lem}
\label{degen}
For every $d$-degenerate graph $G$ with $n\geq d+1$ vertices, $$\cliques(G,k)\leq \binom{d}{k-1}\left(n-\frac{(k-1)(d+1)}{k}\right).$$
\end{lem}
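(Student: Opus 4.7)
The plan is to argue via a degeneracy ordering and the hockey-stick identity. Since $G$ is $d$-degenerate, there is an ordering $v_1, v_2, \ldots, v_n$ of $V(G)$ such that each $v_i$ has at most $d$ neighbours among $\{v_1, \ldots, v_{i-1}\}$ (obtained by repeatedly removing a minimum-degree vertex and placing it last in the remaining graph). Trivially, $v_i$ also has at most $i-1$ earlier neighbours, so the number of earlier neighbours is at most $\min(d, i-1)$.

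Next, I would charge each $k$-clique $C$ of $G$ to its highest-indexed vertex $v_i$. Every other vertex of $C$ lies among the earlier neighbours of $v_i$, so the $k$-cliques charged to $v_i$ correspond to $(k-1)$-cliques among these $\leq \min(d, i-1)$ vertices. Hence at most $\binom{\min(d, i-1)}{k-1}$ cliques are charged to $v_i$, giving
$$\cliques(G,k) \leq \sum_{i=1}^{n} \binom{\min(d, i-1)}{k-1} = \sum_{i=1}^{d} \binom{i-1}{k-1} + (n-d)\binom{d}{k-1}.$$

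Finally, I would close the calculation by applying the hockey-stick identity $\sum_{j=0}^{d-1}\binom{j}{k-1} = \binom{d}{k}$ and simplifying:
$$\binom{d}{k} + (n-d)\binom{d}{k-1} = \binom{d}{k-1}\left(\frac{d-k+1}{k} + n - d\right) = \binom{d}{k-1}\left(n - \frac{(d+1)(k-1)}{k}\right),$$
which is exactly the claimed bound. There is no real obstacle here; the only mild point is remembering to use the sharper bound $\min(d,i-1)$ rather than just $d$ for the first few vertices, since dropping this refinement would yield a weaker estimate $n\binom{d}{k-1}$ and miss the tight constant. This matches the extremal value attained by $d$-trees noted in the introduction.
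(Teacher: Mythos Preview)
Your proof is correct and is essentially the paper's argument unrolled: the paper proceeds by induction on $n$, peeling off a vertex of degree at most $d$ at each step with base case $n=d+1$, which is exactly what your degeneracy ordering does implicitly. Your hockey-stick computation $\sum_{j=0}^{d-1}\binom{j}{k-1}=\binom{d}{k}$ plays the role of the paper's base case $\binom{d+1}{k}$ (note $\binom{d+1}{k}=\binom{d}{k}+\binom{d}{k-1}$, so the two tallies agree), and thereafter each removed vertex contributes $\binom{d}{k-1}$ in both versions.
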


\begin{proof} 
%We proceed by induction on $n$. For the base case with $n=d+1$, the number of triangles is at most $\binom{n}{3}=\binom{d}{2}(n-\frac{2}{3}(d+1))$. Let $G$ be a $d$-degenerate graph with $n\geq d+2$ vertices. There is a vertex $v$ of degree at most $d$ in $G$. The number of triangles containing $v$ is at most $\binom{d}{2}$. The number of triangles not containing $v$ (that is, in $G-v$) is at most $\binom{d}{2}(n-1-\frac23(d+1))$ by induction (since $G-v$ is also $d$-degenerate). In total, $G$ contains at most $\binom{d}{2}(n-\frac23(d+1))$ triangles.
%
We proceed by induction on $n$. For the base case with $n=d+1$, the number of $k$-cliques is at most $\binom{n}{k}=\binom{d}{k-1}(n-\frac{k-1}{k}(d+1))$. Let $G$ be a $d$-degenerate graph with $n\geq d+2$ vertices. There is a vertex $v$ of degree at most $d$ in $G$. The number of $k$-cliques containing $v$ is at most $\binom{d}{k-1}$. The number of $k$-cliques not containing $v$ (that is, in $G-v$) is at most $\binom{d}{k-1}(n-1-\frac{k-1}{k}(d+1))$ by induction (since $G-v$ is also $d$-degenerate). In total, the number of $k$-cliques is at most $\binom{d}{k-1}(n-\frac{k-1}{k}(d+1))$ .
\end{proof}

The bound in \cref{degen} is tight for $d$-trees. The above-mentioned results of \citet{Kostochka82,Kostochka84} and \citet{Thomason84,Thomason01} show that $K_t$-minor-free graphs are $ct\sqrt{\log t}$-degenerate for some constant $c$. \cref{degen} thus implies
$$\cliques(t,n,k) \leq \binom{ct\sqrt{\log t}}{k-1}n \leq (ct\sqrt{\log t})^{k-1}n.$$ 
For fixed $k$, this bound is tight up to a constant factor  as we now explain. \citet{BCE80} proved that for a suitable constant $c>0$ and for large $t$, a random graph  on $n=ct\sqrt{\log t}$ vertices has no $K_t$-minor with high probability. Here each edge is chosen independently with probability $\frac12$. Thus, the expected number of $k$-cliques is $\binom{n}{k}/2^{\binom{k}{2}}$. It follows that for large $t$, there exists an $n$-vertex graph with no $K_t$-minor and with at least $\binom{n}{k}/2^{\binom{k}{2}}$ $k$-cliques. Note that $\binom{n}{k}/2^{\binom{k}{2}}=c'(t\sqrt{\log t})^{k-1}n$ for a suitable constant $c'$. Thus there exists an $n$-vertex graph $G$ with no $K_t$-minor such that $\cliques(G,k)\geq c'(t\sqrt{\log t})^{k-1}n$. Taking disjoint copies of $G$ gives a graph with the same property, where $n\gg t$.  Summarising, for fixed $k$, there are constants $c_1$ and $c_2$ such that 
\begin{equation*}
c_1(t\sqrt{\log t})^{k-1}n \leq \cliques(n,t,k)\leq c_2 (t\sqrt{\log t})^{k-1}n.
\end{equation*}
Thus $\cliques(n,t,k)$ is determined up to a constant factor for fixed $k$. But as $k$ increases with $t$, determining $\cliques(n,t,k)$ is wide open. 
First note that a random graph will have few large cliques.  In fact, the size of the largest clique in a random graph on $t$ vertices is sharply concentrated around $2\log_2 t$ \cite{Matula70,BolErd76}. This motivates the following conjecture about `large' cliques in $K_t$-minor-free graphs.

%Matula: On the complete subgraphs of a random graph

\begin{conj}
\label{LogConj}
For some constants $c_1,c_2>0$, for all integers $t\geq 3$ and $k\geq c_1\log t$ and $n\geq t-1$, 
$$\cliques(n,t,k)\leq (c_2t)^kn.$$
\end{conj}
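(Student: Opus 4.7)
The plan is to combine the averaging argument from the proof of \cref{MaderGen} with the Kostochka--Thomason extremal bound $|E(G)|\le c_3 t\sqrt{\log t}\,n$ for $K_t$-minor-free graphs, and then iterate the resulting recursion $k-1$ times. The key observation is that each application of the identity $\cliques(G,k)=\tfrac{1}{k}\sum_v\cliques(G(v),k-1)$ contributes a factor of $1/k$, so iterating produces a $1/k!$ denominator that absorbs the accumulated $(\sqrt{\log t})^{k-1}$ factor exactly once $k$ grows at least logarithmically in $t$.

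First I would define $h(t,k):=\sup_G \cliques(G,k)/|V(G)|$ over all $K_t$-minor-free graphs $G$ with at least one vertex, so that $\cliques(G,k)\le h(t,k)\,|V(G)|$ for every such $G$. This supremum is finite by degeneracy, and $h(t,1)=1$. Since $G(v)$ is $K_{t-1}$-minor-free on $\deg(v)$ vertices, the induction hypothesis gives $\cliques(G(v),k-1)\le h(t-1,k-1)\deg(v)$; summing and applying $\sum_v\deg(v)=2|E(G)|\le 2c_3 t\sqrt{\log t}\,n$ yields the recursion
\begin{equation*}
h(t,k)\;\le\;\frac{2c_3\,t\sqrt{\log t}}{k}\,h(t-1,k-1).
\end{equation*}
Unrolling down to the base case $h(t-k+1,1)=1$, and using $\sqrt{\log(t-j)}\le\sqrt{\log t}$ together with $\prod_{j=0}^{k-2}(t-j)/(k-j)=\binom{t}{k-1}/k$ and $\binom{t}{k-1}\le(et/(k-1))^{k-1}$, I obtain
\begin{equation*}
h(t,k)\;\le\;\frac{1}{k}\left(\frac{2c_3 e\,t\sqrt{\log t}}{k-1}\right)^{k-1}.
\end{equation*}

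To conclude, suppose $k\ge c_1\log t$ with $c_1$ chosen large enough that $2c_3 e/(c_1\sqrt{\log 3})\le c_2$ (and $c_2\ge 1/3$, so that $c_2 t\ge 1$ for $t\ge 3$). Then $\sqrt{\log t}/(k-1)\le 1/(c_1\sqrt{\log t})\le 1/(c_1\sqrt{\log 3})$, so the bracketed quantity is at most $c_2 t$, and $h(t,k)\le(c_2 t)^{k-1}/k\le(c_2 t)^k$, which is the desired bound. The anticipated obstacle is essentially bookkeeping: the Kostochka--Thomason edge bound must be applied uniformly in $t\ge 3$ (for small $t$ one absorbs the exact bounds of Sections~\ref{sectK7}--\ref{sectK9} into the constant $c_3$), and one must verify that the supremum defining $h$ legitimately covers $G(v)$ even when $\deg(v)$ is much smaller than $t-2$, which is automatic since the supremum ranges over all sizes and $\cliques(H,k-1)=0$ whenever $|V(H)|<k-1$. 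In fact this argument yields the slightly stronger bound $\cliques(n,t,k)\le(c_2 t)^{k-1}n$ under the stated hypothesis, so there is a factor $c_2 t$ of slack relative to the conjecture.
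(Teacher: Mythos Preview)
Your argument is correct and, notably, it settles what the paper leaves as an open conjecture: \cref{LogConj} is stated as a conjecture, and the paper only verifies it in the much weaker range $k\geq \frac{2(\log_23)t/3+o(t)}{\log_2 t}$ by appealing to the Fox--Wei total-clique bound. So there is no ``paper's own proof'' to compare against.

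It is worth observing, though, that your iterated averaging is not really new machinery beyond what the paper already has in hand. \cref{degen} combined with Kostochka--Thomason gives $\cliques(n,t,k)\le\binom{d}{k-1}n$ with $d=ct\sqrt{\log t}$; the paper then crudely estimates $\binom{d}{k-1}\le d^{\,k-1}$, throwing away the $(k-1)!$ in the denominator. Your recursion, once unrolled, recovers exactly that $(k-1)!$ (packaged as $\prod_{j}(k-j)=k!$), and your closing step---that $\sqrt{\log t}/(k-1)$ is bounded once $k\ge c_1\log t$---is precisely the observation that
\[
\binom{ct\sqrt{\log t}}{k-1}\;\le\;\Bigl(\frac{e\,ct\sqrt{\log t}}{k-1}\Bigr)^{k-1}\;\le\;(c_2t)^{k-1}
\]
in this regime. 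So the conjecture already follows from \cref{degen} with a sharper reading of the binomial coefficient; what you have contributed is the (valid) recognition that this suffices. Both routes yield the slightly stronger bound $(c_2t)^{k-1}n$ that you note at the end. One small point of care: in your unrolling you apply the edge bound for $K_{t-j}$-minor-free graphs down to $j=k-2$, so you need $t-k+2\ge 3$; this is automatic since $\cliques(n,t,k)=0$ for $k\ge t$, and the exact extremal functions for small $t$ absorb into $c_3$ as you say.
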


This conjecture is true for $k\geq \frac{2(\log_23)t/3+o(t)}{\log_2 t}$, since the above-mentioned upper bound of \citet{FW16} implies:  
\begin{align*}
\cliques(n,t,k) \leq \cliques(n,t)  \leq 2^{2(\log_23)t/3+o(t)}n \leq t^kn. 
\end{align*}

For very large cliques in $K_t$-minor-free graphs, we conjecture that the lower bound in \eqref{LowerBound} is tight. 

\begin{conj}
\label{MainConj}
For some $\lambda\in[\tfrac13,1)$, for all integers $t\geq 3$ and $k\geq \lambda t$ and $n\geq t-1$, 
$$\cliques(n,t,k)\leq \binom{t-2}{k-1}\left(n-\frac{(k-1)(t-1)}{k}\right).$$
\end{conj}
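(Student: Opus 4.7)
The plan is to mimic the inductive scheme used for \cref{MaderGen,K8,K9}, performing a simultaneous induction on $n$ and $t$. Base cases $n \leq t-1$ are handled by \eqref{smalln}, and $t \leq 9$ by \cref{Summary}. For the inductive step, let $G$ be a $K_t$-minor-free graph on $n \geq t$ vertices, with $k \geq \lambda t$. If $G$ has a vertex $v$ of degree at most $t-2$, then the number of $k$-cliques through $v$ is at most $\binom{t-2}{k-1}$, and applying induction (on $n$) to $G - v$ closes the case. Otherwise the minimum degree is at least $t-1$, and we apply the charging argument $\cliques(G,k) = \tfrac{1}{k}\sum_{v} \cliques(G(v), k-1)$, where each $G(v)$ is $K_{t-1}$-minor-free on $\deg(v)$ vertices. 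Provided $\lambda$ is chosen so that $k-1 \geq \lambda (t-1)$ whenever $k \geq \lambda t$, induction on $t$ bounds each $\cliques(G(v), k-1)$.

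Carrying out the telescoping calculation exactly as in \eqref{CliquesEdges} shows that the target bound follows precisely when $|E(G)| \leq (t-2)\bigl(n - \tfrac{t-1}{2}\bigr)$, i.e.\ when $G$ satisfies Mader's bound. \textbf{This is the main obstacle:} for $t \geq 8$, $K_t$-minor-free graphs can violate Mader's bound, and indeed may have up to $\Theta(t \sqrt{\log t})\,n$ edges by Kostochka and Thomason.

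To overcome this, I would try to exploit the heuristic that edge-extremal $K_t$-minor-free graphs have small clique number. The prototype $K_{c \times 2}$ violates Mader's bound but has clique number only $c \approx \tfrac{2(t-1)}{3}$, hence no $k$-clique when $k \geq 2t/3$. A structural theorem of the form \emph{``every $K_t$-minor-free graph either satisfies Mader's bound or has clique number at most $\lceil 2(t-1)/3 \rceil$''} would settle the conjecture for $\lambda = 2/3$: the exceptional case trivially has no $k$-cliques to count, and the Mader-bound case is closed out by the charging argument. Establishing such a structural result appears to require extending the characterisations of edge-extremal $K_t$-minor-free graphs due to Jorgensen and Song--Thomas to all $t$, which is itself a major open problem; and settling the full range $\lambda \in [\tfrac13,\tfrac23)$ of the conjecture seems to demand genuinely new ideas, since there even $K_{c\times 2}$ itself contributes non-trivially to $\cliques(n,t,k)$.
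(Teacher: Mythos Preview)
The statement is a \emph{conjecture}: the paper does not prove it, and neither does your proposal. You correctly recognise this yourself in the final paragraph. So there is no ``paper's own proof'' to compare against; the paper only offers partial evidence, namely (i) a verification that $K_{c\times 2}$ satisfies the bound once $k\geq \tfrac{2}{3}t$ (indeed it has no such cliques), (ii) a direct proof of the extreme case $k=t-1$ via a separate argument exploiting that any $K_{t-1}$ subgraph disconnects the rest of the graph into pieces each missing a vertex of the clique, and (iii) a calculation showing $\lambda\geq\tfrac13$ is necessary.

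Your diagnosis of why the inductive/charging scheme of \cref{MaderGen} stalls is accurate: the final step needs $|E(G)|\leq (t-2)(n-\tfrac{t-1}{2})$, which fails for $t\geq 8$. One remark on your proposed remedy: the global dichotomy ``either Mader's bound holds or $\omega(G)\leq \lceil 2(t-1)/3\rceil$'' is false as stated. Take the disjoint union of a $(t-2)$-tree (which contains $K_{t-1}$) and a dense random $K_t$-minor-free graph on $\Theta(t\sqrt{\log t})$ vertices; for suitable sizes this violates Mader's bound yet has clique number $t-1$. This particular example does not break the inductive scheme, since the tree part has simplicial vertices of degree $t-2$ to peel off first, but it shows that the dichotomy would have to be stated for graphs of minimum degree at least $t-1$, and even then it is far from clear. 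The paper does not pursue this route at all; the $k=t-1$ case it does settle uses a completely different, non-inductive argument specific to that value of $k$.
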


We now provide two pieces of evidence in support of this conjecture. First, even though $K_{c\times 2}$ contains many cliques in total (see \eqref{Kc2}), it contains no clique of order $c+1\approx \frac23t$, implying $K_{c\times 2}$ satisfies \cref{MainConj} for $\lambda=\frac{2}{3}$. Our second piece of evidence is to prove \cref{MainConj} for $k=t-1$ (which is the largest non-trivial value of $k$). 

\begin{prop}
For all integers $t\geq 2$ and $n\geq t-1$, 
$$\cliques(n,t,t-1)= n-t+2.$$
\end{prop}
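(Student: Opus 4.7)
The lower bound $\cliques(n,t,t-1)\geq n-t+2$ is supplied by \eqref{LowerBound}: a $(t-2)$-tree on $n$ vertices is $K_t$-minor-free and contains exactly $n-t+2$ copies of $K_{t-1}$. For the upper bound I would induct on $n$. The base case $n=t-1$ is immediate because $G\subseteq K_{t-1}$, so $G$ has at most one $(t-1)$-clique, matching $n-t+2=1$.

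For the inductive step, let $G$ be a $K_t$-minor-free graph on $n\geq t$ vertices. If $G$ has a vertex $v$ of degree at most $t-2$, then the $(t-1)$-cliques through $v$ correspond to $(t-2)$-cliques in $G[N(v)]$, so $v$ lies in at most $\binom{\deg(v)}{t-2}\leq 1$ such cliques; deleting $v$ and applying induction to $G-v$ gives $\cliques(G,t-1)\leq 1+(n-1)-t+2=n-t+2$. Otherwise, if $G$ has a vertex-separator $S$ with $|S|\leq t-2$, write $G=G_1\cup G_2$ with $V(G_1)\cap V(G_2)=S$ and both parts proper. Since $|S|<t-1$, no $(t-1)$-clique can lie inside $S$ or straddle it, so $\cliques(G,t-1)=\cliques(G_1,t-1)+\cliques(G_2,t-1)$; applying induction to each $G_i$ (trivially when $|V(G_i)|<t-1$) and using $|V(G_1)|+|V(G_2)|=n+|S|\leq n+(t-2)$ yields the bound.

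It remains to treat the case that $G$ is $(t-1)$-connected (which in particular forces $\delta(G)\geq t-1$). I claim that in this case $G$ has no $(t-1)$-clique whatsoever, so $\cliques(G,t-1)=0$. Suppose for contradiction that $K=\{v_1,\dots,v_{t-1}\}$ is a $(t-1)$-clique of $G$; since $n\geq t$, the subgraph $G-K$ is non-empty. Pick any connected component $C$ of $G-K$. Then $N(C)\subseteq K$, and if $N(C)\subsetneq K$ the set $N(C)$ would separate $C$ from $K\setminus N(C)$ in $G$, contradicting $(t-1)$-connectivity since $|N(C)|<t-1$. Hence $N(C)=K$, so every $v_i$ has a neighbour in $C$. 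The $t$ subgraphs $\{v_1\},\dots,\{v_{t-1}\},V(C)$ are then pairwise disjoint connected branch sets with an edge between every pair, producing a $K_t$-minor of $G$ and contradicting the hypothesis. The genuinely subtle step is this last one: one must use $(t-1)$-connectivity to force $N(C)=K$, after which the $K_t$-minor is built with singleton branch sets on $K$ and $V(C)$ as the extra branch, requiring no edge contractions. The other two cases are standard induction-based reductions.
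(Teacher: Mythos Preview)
Your proof is correct, and it takes a genuinely different route from the paper's.

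The paper proceeds as follows. Assuming $G$ contains a $(t-1)$-clique $K$, it looks at the components $X_1,\dots,X_r$ of $G-K$ and argues that for each $i$ some vertex $v_i\in K$ has no neighbour in $X_i$ (otherwise contracting $X_i$ yields a $K_t$-minor). Every $(t-1)$-clique other than $K$ then lies inside some $G_i:=G[(X_i\cup K)\setminus\{v_i\}]$, which has $|X_i|+t-2<n$ vertices; induction on the $G_i$'s gives $\cliques(G,t-1)\leq 1+\sum_i|X_i|=n-t+2$. This is a single decomposition step with no case split on degree or connectivity.

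Your argument instead filters through three structural cases (low-degree vertex, small separator, $(t-1)$-connected) and in the last case proves the stronger fact that a $(t-1)$-connected $K_t$-minor-free graph contains no $(t-1)$-clique at all. That extra structural statement is a nice byproduct. On the other hand, the paper's argument is shorter and avoids the connectivity dichotomy entirely; its decomposition around a fixed clique $K$ is in spirit the ``separator'' step and the ``no $v_i$'' step rolled into one, without needing to first reduce to the $(t-1)$-connected case.
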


\begin{proof}
The lower bound is provided by \eqref{LowerBound}. For the upper bound, we proceed by induction on $n$. Let $G$ be a $K_t$-minor-free graph on $n\geq t-1$ vertices. If $n=t-1$ then $\cliques(G,t-1)\leq 1$ as desired. Now assume that $n\geq t$. We may assume that $G$ contains $K_{t-1}$.  Let $K$ be the vertex set of a copy of $K_{t-1}$ in $G$. Let $X_1,\dots,X_r$ be the vertex sets of the connected components of $G-K$. If for some $i\in[1,r]$, every vertex in $K$ has a neighbour in $X_i$, then contracting $G[X_i]$ to a single vertex (with $K$) gives a $K_t$-minor in $G$. Thus, for each $i\in[1,r]$, some vertex $v_i$ in $K$ is adjacent to no vertex in $X_i$. Each copy of $K_{t-1}$ distinct from $K$ is contained in $G_i:=G[(X_i\cup K)\setminus\{v_i\}]$ for some $i\in[1,r]$. Thus $$\cliques(G,t-1)=1+ \sum_{i=1}^r\cliques(G_i,t-1).$$ Note that each $G_i$ has $|X_i|+t-2$ vertices, which is at least $t-1$ and less than $n$. By induction, 
\begin{equation*}
\cliques(G,t-1)\leq 1+ \sum_{i=1}^r|X_i| = 1+ n-(t-1)=n-t+2.\qedhere
\end{equation*}
\end{proof}

Finally we justify the lower bound of $\lambda\geq\tfrac13$ in \cref{MainConj}. Again, $K_{c\times 2}$ is the example. Assume $c$ is even. As noted earlier, $G$ contains no $K_t$-minor where $t=\tfrac32 c+1$. Observe that $\cliques(K_{c\times 2},k)=\binom{c}{k}2^k$. The following series of conditions are equivalent. 
\begin{align}
\cliques(K_{c\times 2},k) & \leq \binom{t-2}{k-1}\left(n - \frac{(k-1)(t-1)}{k}\right)\nonumber\\
\binom{c}{k}2^k & \leq \binom{t-2}{k-1}\left(2c - \left(1-\frac{1}{k}\right)\frac{3c}{2}\right)\nonumber\\
\frac{c!\,2^k}{(c-k)!k!} & \leq \frac{(t-2)!}{(k-1)!(t-k-1)!}\left(\frac{c}{2} + \frac{3c}{2k}\right)\nonumber\\
\frac{c!\,2^{k+1}}{(c-k)!} & \leq \frac{(t-2)!\,c(k+3)}{(t-k-1)!}\nonumber\\
2^{k+1}\prod_{i=1}^{k-1}(c-i) & \leq (k+3) \prod_{i=1}^{k-1}(t-i-1)\nonumber\\
2^{k+1}\prod_{i=1}^{k-1}(c-i) & \leq (k+3) \prod_{i=1}^{k-1}\left(\frac{3c}{2}-i\right)\label{K222iff}
\end{align}
Thus $K_{c\times 2}$ satisfies \cref{MainConj} for a particular value of $k$ if and only if \eqref{K222iff} holds. 
We now show that  \eqref{K222iff} is not satisfied by $K_{c\times 2}$ for small $k$; that is, $K_{c\times 2}$ has many small cliques. Fix $\epsilon>0$. 
Let $k\geq k(\epsilon)$ and $c\geq (2+\epsilon)k$. 
Since $1\leq i< k\leq\frac{c}{2+\epsilon}$, it follows that $2(c-i) \geq (1+\frac{\epsilon}{4+3\epsilon})(\tfrac{3c}{2}-i)$. Thus
$$2^{k+1}\prod_{i=1}^{k-1}(c-i) > 4\left(1+\frac{\epsilon}{4+3\epsilon}\right)^{k-1}\prod_{i=1}^{k-1}\left(\frac{3c}{2}-i\right).$$
Now $4(1+\frac{\epsilon}{4+3\epsilon})^{k-1}> k+3$ for large $k\geq k(\epsilon)$. Thus
 \eqref{K222iff} is not satisfied, and 
$$\cliques(K_{c\times 2},k) > \binom{t-2}{k-1}\left(n-\frac{(k-1)(t-1)}{k}\right).$$
Thus $k>\frac{c}{2+\epsilon}$ for \cref{MainConj} to hold for $K_{c\times 2}$. Since $c\approx\frac23t$, this says that $\lambda\geq\frac13-\epsilon$ in \cref{MainConj}.

\subsection*{Acknowledgements} Many thanks to both referees for their very thorough reviews.

%%%%%%%%%%%%%%%%%%%%%%%%%
%\bibliographystyle{myNatbibStyle}
%\bibliography{myBibliography,myConferences}
%%%%%%%%%%%%%%%%%%%%%%%%%

\def\soft#1{\leavevmode\setbox0=\hbox{h}\dimen7=\ht0\advance \dimen7
  by-1ex\relax\if t#1\relax\rlap{\raise.6\dimen7
  \hbox{\kern.3ex\char'47}}#1\relax\else\if T#1\relax
  \rlap{\raise.5\dimen7\hbox{\kern1.3ex\char'47}}#1\relax \else\if
  d#1\relax\rlap{\raise.5\dimen7\hbox{\kern.9ex \char'47}}#1\relax\else\if
  D#1\relax\rlap{\raise.5\dimen7 \hbox{\kern1.4ex\char'47}}#1\relax\else\if
  l#1\relax \rlap{\raise.5\dimen7\hbox{\kern.4ex\char'47}}#1\relax \else\if
  L#1\relax\rlap{\raise.5\dimen7\hbox{\kern.7ex
  \char'47}}#1\relax\else\message{accent \string\soft \space #1 not
  defined!}#1\relax\fi\fi\fi\fi\fi\fi}

\end{document}